\newtheorem{theo}{Theorem}
\newenvironment{Theo}{\begin{theo}\slshape}{\end{theo}}
\newtheorem{Lemm}{Lemma}[section]
\newtheorem{Prop}{Proposition}[section]
\newtheorem{Corol}{Corollary}[section]
\newtheorem{Exp}{Example}[section]
\newtheorem{rema}{Remark}
\newenvironment{Rema}{\begin{rema}\normalfont}{\end{rema}}
\newtheorem{defi}{Definition}
\newcommand{\rr}{\mathbf r}
\newcommand{\eps}{\varepsilon}
\def\qed{\hfill$\square$\par \vspace{.1cm}}
\newenvironment{Demo}{{\bf Proof.}}{\qed}
\newcommand{\R}{\mathbb{R}}
\newcommand{\N}{\mathbb{N}}
\newcommand{\Z}{\mathbb{Z}}
\newcommand{\Ec}{\mathcal{E}}
\newcommand{\Vc}{\mathcal{V}}
\DeclareMathOperator{\Ran}{Ran}
\newcommand{\norm}[1]{\left\Vert#1\right\Vert}
\newcommand{\tir}{\discretionary{.}{}{---\kern.7em}}
\newcommand{\di}{\mathrm d} %
\renewcommand{\Im}{\operatorname{Im}}
\newcommand{\Ker}{\operatorname{Ker}}
\newcommand{\cref}[1]{Corollary~\ref{#1}}
\begin{document}

\title[A GRAPH WITHOUT ZERO IN ITS SPECTRA]
{A GRAPH WITHOUT ZERO IN ITS SPECTRA}

\author[C. Ann\'e]{Colette Ann\'e}
\address{CNRS/Nantes Universit\'e, Laboratoire de Math\'ematique Jean Leray, Facult\'e des Sciences, BP 92208, 44322 Nantes, (France).}

\email{colette.anne@univ-nantes.fr}

\author[H. Ayadi]{Hela Ayadi}

\address{Universit\'e de Monastir, (LR/18ES15)
\& Acad\'emie Navale, 7050 Menzel-Bourguiba (Tunisie)}
\email{ halaayadi@yahoo.fr }

\author[M. Balti]{Marwa Balti}

\address{King Faisel University}
\email{balti-marwa@hotmail.fr}

\author[N. Torki-Hamza]{Nabila Torki-Hamza}
\address{Universit\'e de Monastir, LR/18ES15 \& Institut Sup\'erieur d'Informatique de Mahdia (ISIMa) B.P 05, Campus Universitaire de Mahdia; 5147-Mahdia (Tunisie).}
\email{natorki@gmail.com}


\subjclass[2010]{39A12, 05C63, 47B25, 05C12, 05C50}
\keywords{Graph, 1-form Laplacian, eigenvalues, Spectrum,  Essential spectrum, $\chi$-completeness.}
\date{Version of \today, {\em file} : 2023V10.tex}

\begin{abstract}
In this paper we consider the discrete Laplacian acting on 1-forms and we study its spectrum relative to the spectrum of the 0-form Laplacian. We show that the non zero spectrum can coincide for these Laplacians with the same nature. We examine the characteristics of 0-spectrum of the 1-form Laplacian compared to the cycles of graphs.
\end{abstract}


\maketitle

\tableofcontents

\section*{Introduction}

\medskip

Spectral graph theory stands for an active area of research. Recently, many authors are interested in studying the spectra of the Laplacians on infinite graphs and the relationship to the structure and characteristic properties of graphs and operators, we can cite \cite{yc}, \cite{jor}, \cite{F}, \cite{gry}, \cite{gry1},\cite{lp}, \cite{moh}, \cite{so}, \cite{SW91}, \cite{St10}, \cite{nth},  and \cite{wo}. This question has been studied in several topics, for example, harmonic analysis on graphs, probability theory on Markov chains, potential theory, and so on...  In 1996, John Lott adressed the following question \cite{Jo96}: let M be a complete oriented Riemannian n-manifold, then the Laplacian $\Delta_p$ is an essentially 
self-adjoint non negative operator on square-integrable p-forms, $0\leq p\leq n$, 
and thus has a spectrum which is a subset of $\R^+;$ 
must zero always belong to the spectrum of some $\Delta_p$~?
In 2001, Farber and Weinberger \cite{FW} disproved the conjecture "zero-in-the-spectrum" for manifolds. They show that for any $n\geq 6$ there exists a closed manifold $M^n$ such that, for any degree,  zero does not belong to the spectrum of the Hodge-Laplace operator acting
on $L^2$ differential forms of the universal covering $\tilde M$.
In the discrete case,  Ayadi \cite{Ay} showed in 2017 that 0 is either in the spectrum of
the Laplacian on 0-forms, or in the spectrum of the Laplacian on 1-forms on a weighted  infinite
normalized graph such that the weight on edges is bounded from above and from below.
{\bf In this paper we generalize this result.} 
Indeed, we study how the spectrum of 0-form and 1-form Laplacians coincide outside zero. And in order to
characterize the spectrum outside zero, we study the conjecture "zero-in-the-spectrum" how
this is related with the structure of the graph.
 We define two Laplacians, mentioned
in \cite{A}, \cite{Ay} and \cite{BGJ}, one as an operator acting on functions on vertices denoted by $\Delta_0$ and the other one acting on skew-symmetric functions on edges denoted by $\Delta_1$.
We discuss the relation between
\begin{itemize}
  \item The eigenvalues of $\Delta_0$ and these of $\Delta_1$ counting their multiplicities for the finite case.
  \item  The spectrum of $\Delta_0$ and that of $\Delta_1$ for the infinite case.
  \item The essential and the discrete spectrum of $\Delta_0$ and $\Delta_1$.
  \item The problem of 0 in the spectrum.
\end{itemize}
After Section 1 devoted to the the preliminaries , we have three sections: 
In Section \ref{first}, we consider a finite connected graph $G$. We introduce the adjacency matrix associated to the 1-form Laplacian $\Delta_1$ such that it is easier to
examine its spectrum. We study its eigenvalues.
Although its relation to that of 0-forms $\Delta_0$. We can see that it reflects in a very natural way the structure of the graph, in particular the aspects related to the cycles of the graph. We show that the eigenvalues are the same outside of zero and they have the same multiplicity.
In section \ref{infinite}, we extended this result to infinite graphs. We explore in the case
of unbounded 1-form Laplacian, Weyl's criterion.
Section \ref{exp} presents some examples which support  our results.

\section{Preliminaries}

\subsection{Definition and notation}

\begin{itemize}
\item A graph $G$ is a couple $(\mathcal{V}, \mathcal{E})$ where $\mathcal{V}$ is a set at most countable whose elements are called vertices and $\mathcal{E}$ is a set of oriented edges, considered as a subset of $\mathcal{V}\times\mathcal{V}$.\\
 \item  If the graph $G$ has a finite set of vertices, it is called a finite graph. Otherwise, $G$ is called an infinite graph.\\
\item We assume that $\mathcal{E}$ has no self-loops and is symmetric:
$$v \in \mathcal{V} \Rightarrow (v, v) \notin \mathcal{E},
 \;\; (v_{1}, v_{2})\in \mathcal{E} \Rightarrow (v_{2}, v_{1})\in \mathcal{E}.$$
\item  Choosing an orientation of $G$ consists of defining a partition of $\mathcal{E}$: $\mathcal{E}^{+}\sqcup \mathcal{E}^{-}=\mathcal{E}$
 $$(v_{1}, v_{2})\in \mathcal{E}^{+} \Leftrightarrow (v_{2},v_{1})\in \mathcal{E}^{-}.$$
\item For $e=(v_{1}, v_{2})$, we denote
$$e^{-}=v_{1},\; e^{+}=v_{2}\text{ and } -e=(v_{2}, v_{1}).$$
\item We write $v_{1}\sim v_{2}$ for $e=(v_{1}, v_{2}) \in \mathcal{E}$.\\
\item The graph $G$ is connected if any two vertices $x$, $y$ in $\mathcal{V}$ can be joined by a path of edges $\gamma_{xy}$, that means $\gamma_{xy}=\{e_{k}\}_{k=1,..., n}$ such that
     $$ e^{-}_{1}=x,\; e^{+}_{n}=y\text{ and, if } n\geq2\; ,\;\forall j \; ;1\leq j \leq (n-1) \Rightarrow e^{+}_{j}=e^{-}_{j+1}.$$
\item A cycle of $G$ is a path of edges $\gamma_{xy}=\{e_{k}\}_{k=1,..., n}$ satisfying more over
     $$  e_n^+=e_1^-.$$
 \item  The degree (or valence) of a vertex $x$ is the number of edges emanating from $x$. We denote
$$ \mathrm{deg}(x):= \sharp\{  e\in \mathcal{E};\;e^{-} = x  \}.$$
  \item 
The graph $G$ is called a locally finite graph, if all its vertices are of finite valence. 

 \end{itemize}

 \subsection{Weighted graphs}
\begin{defi}\label{weight}
A weighted graph $(G, m, c)$ is given by a graph
 $G=(\mathcal{V}, \mathcal{E})$, a weight on the vertices
 $m:\mathcal{V} \rightarrow ]0, \infty[$ and a weight on the edges
  $c: \mathcal{E} \rightarrow ]0, \infty[$ such that 
 \begin{itemize}
\item $c(x, y)> 0,\; \forall (x, y) \in \mathcal{E}.$
 \item $c(x, y)=c(y, x),\; \forall (x, y) \in \mathcal{E}.$
 \end{itemize} 
\end{defi}

\textbf{Examples:}
- An infinite electrical network is a weighted graph $(G,m ,c)$ where the weights of the edges
correspond to the conductances $c$; their inverses are the resistances. And the weights of the vertices are given by $m(x)=\sum_{y\in \mathcal{V}} \frac{1}{ c(x, y)}< \infty,\; \forall x \in \mathcal{V}$, \textit{that means the vertices have no charge}.\\

-The graph $G$ is called a simple graph if the weights of the edges and the vertices equal $1$.\\

\textbf{All the graphs we shall consider in the sequel will be connected, locally finite and
weighted as given in Definition 1.}

\subsection{Functional spaces}
We denote the set of real functions on $\mathcal{V}$ by:
$$\mathcal{C}(\mathcal{V})=\{f:\mathcal{V} \rightarrow \R\}$$

and the set of functions of finite support by $\mathcal{C}_{0}(\mathcal{V})$.\\

Moreover, we denote the set of real skew-symmetric functions (or 1-forms) on $\mathcal{E}$ by:
$$\mathcal{C}^{a}(\mathcal{E})=\{\varphi:\mathcal{E} \rightarrow \R \;;
 \varphi(-e)=- \varphi(e)\}$$

and the set of functions of finite support by $\mathcal{C}^{a}_{0}(\mathcal{E})$.\\

We define on the weighted graph $(G, m, c)$ the following function spaces endowed with the scalar products.

\begin{itemize}
  \item[a)] $$l^{2}(\mathcal{V}):=\left\{ f\in \mathcal{C}(\mathcal{V})  ;\;
   \sum_{x\in \mathcal{V}} {m}  (x) f^{2}(x)<\infty \right\},$$

with the inner product
$$\langle f, g\rangle_{\mathcal{V}}=\sum_{x\in \mathcal{V}} {m}(x) f(x)g(x)$$

and the norm
$$\norm{f}_{\mathcal{V}}=\sqrt{\langle f, f \rangle_{\mathcal{V}}}.$$

\item[b)] $$l^{2}(\mathcal{E}):=\left\{ \varphi \in \mathcal{C}^{a}(\mathcal{E});\;
\frac{1}{2}\sum_{e\in \mathcal{E}} c(e) \varphi^{2}(e)<\infty \right\},$$

with the inner product
$$\langle \varphi, \psi \rangle_{\mathcal{E}}=
\frac{1}{2} \sum_{e\in \mathcal{E}} c(e) \varphi(e)\psi (e)$$

and the norm
$$\norm{\varphi}_{\mathcal{E}}=
\sqrt{\langle \varphi, \varphi\rangle_{\mathcal{E}}}.$$
\end{itemize}

Then, $l^{2}(\mathcal{V})$ and $l^{2}(\mathcal{E})$ are separable Hilbert spaces (since $\mathcal{V}$ is countable).

\subsection{Operators and properties}

\underline{The difference operator} 
$$\mathrm{d}:\mathcal{C}_{0}(\mathcal{V})\longrightarrow \mathcal{C}^{a}_{0}(\mathcal{E}),$$

is given by
$$\mathrm{d}(f)(e)=f(e^{+})-f(e^{-}).$$

\underline{The coboundary operator} is $\delta$, the formal adjoint of $\mathrm{d}$. Thus it satisfies
\begin{equation}\label{op div}
\langle \mathrm{d}f, \varphi \rangle_{\mathcal{E}}=
\langle f, \delta \varphi \rangle_{\mathcal{V}}\end{equation}
for all $ f \in \mathcal{C}_{0}(\mathcal{V})$ and for all $ \varphi \in \mathcal{C}^{a}_{0}(\mathcal{E}).$\\

As consequence, we have the following formula characterizing $\delta$, given by \cite{AT}:

\begin{Lemm}\label{lem1}
The coboundary operator $\delta$ is characterized by the formula

$$\delta \varphi (x)=\frac{1}{{m}(x)}  \sum_{e, e^{+}=x} c(e) \varphi (e),$$

for all $ \varphi \in \mathcal{C}^{a}_{0}(\mathcal{E}).$

\end{Lemm}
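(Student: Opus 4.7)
The plan is to unwind the adjoint identity \eref{op div} and extract the pointwise formula for $\delta\varphi(x)$ by testing against indicator functions of single vertices. The calculation is routine, but it requires care in handling the skew-symmetry of $\varphi$ together with the symmetry $c(-e)=c(e)$ to eliminate the factor $1/2$ in the edge inner product.

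First, I would fix $f \in \mathcal{C}_0(\mathcal{V})$ and $\varphi \in \mathcal{C}_0^a(\mathcal{E})$, and expand the left-hand side of \eref{op div} using the definition of $\mathrm{d}$ and the inner product on $\mathcal{E}$:
\[
\langle \mathrm{d}f,\varphi\rangle_{\mathcal{E}} \;=\; \frac{1}{2}\sum_{e\in\mathcal{E}} c(e)\bigl(f(e^+)-f(e^-)\bigr)\varphi(e).
\]
The key manipulation is to apply the substitution $e\mapsto -e$ to the term containing $f(e^-)$. Since $(-e)^+=e^-$, $c(-e)=c(e)$, and $\varphi(-e)=-\varphi(e)$, this substitution converts $\sum_e c(e)f(e^-)\varphi(e)$ into $-\sum_e c(e)f(e^+)\varphi(e)$. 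Consequently the two halves of the sum combine to cancel the prefactor $1/2$, giving
\[
\langle \mathrm{d}f,\varphi\rangle_{\mathcal{E}} \;=\; \sum_{e\in\mathcal{E}} c(e)\,f(e^+)\,\varphi(e).
\]

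Next, I would reorganise this edge sum as a sum over vertices by grouping edges according to their head: writing $x = e^+$,
\[
\sum_{e\in\mathcal{E}} c(e)\,f(e^+)\,\varphi(e) \;=\; \sum_{x\in\mathcal{V}} f(x)\sum_{e,\,e^+=x} c(e)\,\varphi(e),
\]
which is legitimate because $f$ has finite support and the graph is locally finite, so all inner sums are finite. Comparing this expression with
\[
\langle f,\delta\varphi\rangle_{\mathcal{V}}\;=\;\sum_{x\in\mathcal{V}} m(x)\,f(x)\,\delta\varphi(x),
\]
I obtain an identity that must hold for every $f \in \mathcal{C}_0(\mathcal{V})$.

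Finally, to conclude, I would test the identity against $f=\1_{\{y\}}$ for an arbitrary $y\in\mathcal{V}$. This isolates a single term on each side and yields
\[
m(y)\,\delta\varphi(y) \;=\; \sum_{e,\,e^+=y} c(e)\,\varphi(e),
\]
from which the announced formula follows upon dividing by $m(y)>0$. The only mildly delicate step is the skew-symmetry substitution that kills the factor $1/2$; everything else is bookkeeping.
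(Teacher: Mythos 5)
Your proof is correct: unwinding the adjoint identity \eref{op div}, using the substitution $e\mapsto -e$ together with $c(-e)=c(e)$ and $\varphi(-e)=-\varphi(e)$ to absorb the factor $\frac{1}{2}$, regrouping by the head vertex, and testing against $\1_{\{y\}}$ is exactly the standard derivation. The paper itself gives no proof and simply cites \cite{AT}, where the formula is obtained by this same computation, so your argument fills in precisely the intended reasoning.
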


\begin{defi}
The Laplacian on 0-forms $\Delta_{0}$ defined  by $\delta \mathrm{d} $ on $\mathcal{C}_{0}(\mathcal{V})$ is given by
$$ \Delta_{0}f(x)=\frac{1}{{m}(x)}  \sum_{ y\sim x} c(x, y) \left(f (x)-f (y)\right).$$
In fact, we have
\begin{eqnarray*}
\Delta_{0}f(x)&=&\delta (\mathrm{d}f) (x)\\
&=&\frac{1}{{m}(x)}  \sum_{e, e^{+}=x} c(e) \mathrm{d}f (e)\\
&=&\frac{1}{{m}(x)}  \sum_{e, e^{+}=x} c(e) \left(f (e^{+})-f (e^{-})\right)\\
&=&\frac{1}{{m}(x)}  \sum_{y\sim x} c(x, y) \left(f (x)-f (y)\right).
\end{eqnarray*}
\end{defi}
\begin{defi}\label{lap1}
 The Laplacian on 1-forms $\Delta_{1}$ defined by $\mathrm{d}\delta$ on $\mathcal{C}^{a}_{0}(\mathcal{E})$ is given by
 $$\Delta_{1}\varphi(e)=\frac{1}{{m} (e^{+})}  \sum_{e_{1}, e_{1}^{+}=e^{+}} c(e_{1}) \varphi (e_{1})-\frac{1}{{m} (e^{-})}  \sum_{e_{2}, e_{2}^{+}=e^{-}} c(e_{2}) \varphi (e_{2}).$$
In fact, we have 
\begin{eqnarray*}
\Delta_{1}\varphi(e)&=& \mathrm{d}(\delta \varphi) (e)\\
&=& \delta \varphi(e^{+})-\delta \varphi(e^{-})\\
&=& \frac{1}{{m}(e^{+})}  \sum_{e_{1}, e_{1}^{+}=e^{+}} c(e_{1}) \varphi (e_{1})-\frac{1}{{m}(e^{-})}  \sum_{e_{2}, e_{2}^{+}=e^{-}} c(e_{2}) \varphi (e_{2}).
\end{eqnarray*}
\end{defi}
\begin{rema}
\begin{enumerate}
\item As the graph $G$ is connected and locally finite the operators $\mathrm{d}$ and $\delta$ are
closable, see \cite{AT}.
\item The operator $\Delta_{0}$ is a positif symmetric operator on $\mathcal{C}_{0}(\mathcal{V})$, as we have $\langle \Delta_{0}f, f\rangle_{\mathcal{V}}=\langle \mathrm{d} f, \mathrm{d} f\rangle_{\mathcal{E}},\;\forall f \in \mathcal{C}_{0}(\mathcal{V})$ and $\langle \Delta_{0}f, g\rangle_{\mathcal{V}}=\langle f, \Delta_{0}g\rangle_{\mathcal{V}},\;\forall f,\;g \in \mathcal{C}_{0}(\mathcal{V})$.
\item The operator $\Delta_{1}$ is a positif symmetric operator on $\mathcal{C}^{a}_{0}(\mathcal{E})$, as we have $\langle \Delta_{1}\varphi, \varphi\rangle_{\mathcal{E}}=\langle \mathrm{\delta} \varphi, \mathrm{\delta} \varphi \rangle_{\mathcal{V}},\;\forall \varphi \in \mathcal{C}^{a}_{0}(\mathcal{E})$ and $\langle \Delta_{1}\varphi, \psi \rangle_{\mathcal{E}}=\langle \varphi, \Delta_{1}\psi \rangle_{\mathcal{E}},\;\forall \varphi,\;\psi \in \mathcal{C}^{a}_{0}(\mathcal{E})$.
\end{enumerate}
\end{rema}
\section{Finite case}\label{first}
In this section, first we will consider a finite simple graph $G$, that means a graph with a finite number of vertices and edges and the weights $m=c=1$. We denote $k$ the number of vertices and $n$ the number of edges on $G$. Then we can write the Laplacian on 1-forms $\Delta_1$ as a matrix to find results in its 0-spectrum. After that we will generalize the result for weighted finite graph.\\

Now we give the matrix associated to the Laplacian on 1-forms. We have 
\begin{eqnarray*}
\Delta_{1}\varphi(e)&=& \mathrm{d}(\delta \varphi) (e)\\
&=& \delta \varphi(e^{+})-\delta \varphi(e^{-})\\
&=&  \sum_{e_{1}, e_{1}^{+}=e^{+}} \varphi (e_{1}) +\sum_{e_{2}, e_{2}^{+}=e^{-}}  \varphi (e_{2})\\
&=&  2 \varphi(e)+ \sum_{x, x\neq e^{-}} \varphi (x,e^{+}) +\sum_{y, y\neq e^{+}}  \varphi (y,e^{-})
\end{eqnarray*}
Then we obtain
$$ \Delta_{1}=2 \hbox{I}+\hbox{A}_{1}$$
where $\hbox{I}$ is the identity matrix of size n and $\hbox{A}_{1}$ is the adjacency matrix on 1-forms. \\
More precisely, we have $\hbox {A}_1=(a_{i,j})_{1\leq i,j\leq n}\;i \neq j$ and $e_{i}$ and $e_{j}$ are two edges with commun vertex, such that

$$
a_{i,j}=\left\{
  \begin{array}{ll}
    1\;\;\;\;\;\;\;\hbox{ if } e_{j}^{\pm}=e_{i}^{\pm}\\
    \\
   -1\;\;\;\;\hbox{ if } e_{j}^{\pm}=e_{i}^{\mp}\\
    \\
   0\;\;\;\;\;\;\;\hbox{ otherwise }.
\end{array}
\right.$$
\medskip
\bigskip
\\
\textbf{Examples:}
\begin{enumerate}
\item We consider the complete graph $K_5$ with five vertices and twenty edges. But the skew-symmetric functions are defined by ten values. Then the size of the matrix associated to the operator $\Delta_1$ is $10\times 10$.
 \begin{figure}[ht]
\includegraphics[height=4cm,width=5cm]{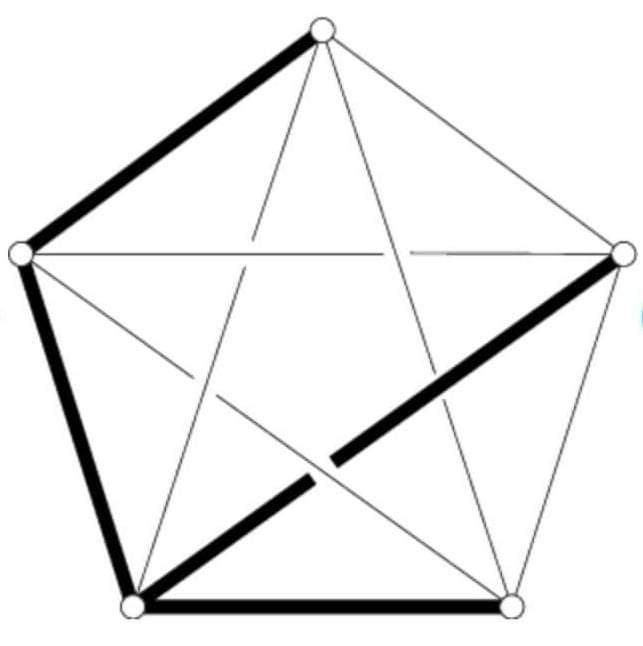}
\caption{The complete graph $K_5$}
 \end{figure}
$$\Delta_1=\left(
      \begin{array}{ccccccccccc}
        2 & -1 & 0 & 0 & -1 & 1 & 1 & -1 & -1 & 0\\
        -1 & 2 & -1 & 0 & 0 & 1 & 0 & 1 & 1 & -1\\
        0 & -1 & 2 & -1 & 0 & -1 & 1 & 1 & 0 & 1\\
        0 & 0 &  -1 & 2 & -1& 0& -1 & -1 & 1 & 1 \\ 
        -1& 0 & 0 & -1 & 2 & -1 & -1 & 0 & -1 & -1\\
        1 & 1 & -1 & 0 & -1 & 2 & 1 & 0 & 0 & -1\\
        1 & 0 & 1 & -1 & -1& 1 & 2 & 1 & 0 & 0\\
        -1& 1 & 1 & -1 & 0 & 0 & 1 & 2 & 1 & 0\\
        -1 & 1 & 0 & 1 & -1 & 0 & 0 & 1 & 2 & 1\\
        0 & -1 & 1 & 1 & -1& -1 & 0 & 0 & 1 & 2\\
      \end{array}
    \right)
$$
Then we have the eigenvalues of $\Delta_1$ equal to 5 with multiplicity 4 and \textit{0 with multiplicity 6}. And its known that the eigenvalues of $\Delta_0$ equal to 5 with multiplicity 4 and \textit{0 with multiplicity 1}. So we have the same non-zero eigenvalues with same multiplicity and the zero eigenvalue with different multiplicity.\\

\item Let $G$ a graph with 4 vertices and 4 edges then we have the matrix on 1-forms given by
$$\Delta_1=\left(
      \begin{array}{cccc}
        2 & -1 & 0 & -1\\
        -1 & 2 & -1 & 0 \\
        0 & -1 & 2 & -1 \\
       -1 & 0 &  -1 & 2  \\ 
        \end{array}
    \right)
$$
Hence we obtain the eigenvalues of $\Delta_1$ equal to 2 with multiplicity 2, 4 with multiplicity 1 and \textit{0 with multiplicity 1}, which is the same spectrum of  $\Delta_0$.
\end{enumerate}
We remark that the spectrum of $\Delta_0$ and $\Delta_1$ coincide in certain cases, indeed the
difference occur only for the value 0. So in the next section we will study the spectrum question of the different operators.
\subsection{Spectrum} Denote by $\sigma_p(A)$ the point spectrum of a linear operator $A$
which consists of eigenvalues. The spectrum of $\Delta_0$ and $\Delta_1$ respectively consist of non negative eigenvalues. We denote them $0=\mu_1\leq \mu_2 \leq . . . \leq \mu_m$ and $\lambda_1 \leq \lambda_2 \leq . . . \leq \lambda_n$ repeated according to their multiplicity.\\

\begin{Prop}\label{mult} The operators $\Delta_0$ and $\Delta_1$ have the same set of positive
  eigenvalues.
\end{Prop}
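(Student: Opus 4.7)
The plan is to exploit the factorizations $\Delta_0=\delta\mathrm{d}$ and $\Delta_1=\mathrm{d}\delta$, and invoke the general principle that for a bounded operator $A$ between Hilbert spaces (here $A=\mathrm{d}$ with adjoint $\delta$, and in the finite case everything is bounded) the operators $A^*A$ and $AA^*$ share the same nonzero spectrum. Concretely, I would show that $\mathrm{d}$ induces a bijection between the $\lambda$-eigenspace of $\Delta_0$ and the $\lambda$-eigenspace of $\Delta_1$ for every $\lambda>0$, with inverse $\frac{1}{\lambda}\delta$.

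For one inclusion, suppose $\Delta_0 f=\lambda f$ with $\lambda>0$ and $f\neq 0$. Applying $\mathrm{d}$ on both sides gives
\begin{equation*}
\Delta_1(\mathrm{d} f)=\mathrm{d}\delta\mathrm{d} f=\mathrm{d}(\Delta_0 f)=\lambda\,\mathrm{d} f.
\end{equation*}
Moreover $\mathrm{d} f\neq 0$: using the adjunction \eref{op div},
$\|\mathrm{d} f\|_{\mathcal{E}}^2=\langle \delta\mathrm{d} f,f\rangle_{\mathcal{V}}=\lambda\|f\|_{\mathcal{V}}^2>0$. Hence $\lambda\in\sigma_p(\Delta_1)$. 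For the reverse inclusion, if $\Delta_1\varphi=\lambda\varphi$ with $\lambda>0$ and $\varphi\neq 0$, then $\Delta_0(\delta\varphi)=\delta\mathrm{d}\delta\varphi=\delta(\Delta_1\varphi)=\lambda\,\delta\varphi$, and $\|\delta\varphi\|_{\mathcal{V}}^2=\langle\mathrm{d}\delta\varphi,\varphi\rangle_{\mathcal{E}}=\lambda\|\varphi\|_{\mathcal{E}}^2>0$, so $\delta\varphi\neq 0$ and $\lambda\in\sigma_p(\Delta_0)$.

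To upgrade this to equality of multiplicities (which the examples suggest), I would note that the two maps
$\mathrm{d}:\ker(\Delta_0-\lambda)\to\ker(\Delta_1-\lambda)$ and $\frac{1}{\lambda}\delta:\ker(\Delta_1-\lambda)\to\ker(\Delta_0-\lambda)$ are mutually inverse: indeed for $f$ in the $\lambda$-eigenspace of $\Delta_0$, $\tfrac{1}{\lambda}\delta(\mathrm{d} f)=\tfrac{1}{\lambda}\Delta_0 f=f$, and symmetrically for $\varphi$. This yields an isomorphism of eigenspaces and hence equality of multiplicities for every $\lambda>0$.

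There is no real obstacle here since everything is finite-dimensional, so all domains are the full spaces $\mathcal{C}(\mathcal{V})$ and $\mathcal{C}^a(\mathcal{E})$ and the symbolic manipulations above are legitimate without any closure or self-adjointness subtlety; the only point to be careful about is the strict positivity of $\lambda$, which is exactly what makes the map $\mathrm{d}$ injective on the eigenspace (for $\lambda=0$ it obviously is not, which matches the discrepancy observed in the examples and foreshadows the discussion of the kernels in the subsequent sections).
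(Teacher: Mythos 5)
Your argument is correct and is essentially the paper's own proof: apply $\mathrm{d}$ to a $\lambda$-eigenfunction of $\Delta_0$ to get $\Delta_1(\mathrm{d}f)=\lambda\,\mathrm{d}f$ with $\|\mathrm{d}f\|_{\mathcal{E}}^2=\lambda\|f\|_{\mathcal{V}}^2\neq 0$, and symmetrically use $\delta$ for the converse. Your additional observation that $\mathrm{d}$ and $\tfrac{1}{\lambda}\delta$ are mutually inverse on the eigenspaces goes slightly beyond this statement and recovers (in a cleaner way) the multiplicity result the paper proves separately via a linear-independence argument.
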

\begin{Demo} 
  Let $\mu$ be a non zero eigenvalue of $\Delta_0$ and $f$ an associated non constant eigenfunction, we have if $f\in \mathcal{C}_0(\mathcal{V})$, then $\mathrm{ d }f\in \mathcal{C}_0(\mathcal{E})$,
  $\|\mathrm{ d }f\|^2=\mu\|f\|^2\neq 0$ and
\begin{align*}
\Delta_1\mathrm{ d }f&=\mathrm{ d }\delta \mathrm{ d }f\\
&=\mathrm{ d }\mu f\\
&=\mu \mathrm{ d }f,
\end{align*}
therefore $\mathrm{ d }f$ is an eigenfunction of $\Delta_1$, and $\mu\in \sigma_p(\Delta_1)$.\\
For the spectrum of $\Delta_1$ we show in the same way that if $\varphi$ is an
eigenfunction 1-form of $\Delta_1$, then $\delta \varphi$ is the eigenfunction associated
to $\Delta_0$.
\end{Demo}
\begin{Prop}\label{multp}
The multiplicity of the non-zero eigenvalues of $\Delta_0$ and $\Delta_1$ are the same. 
\end{Prop}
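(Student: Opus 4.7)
The plan is to upgrade the argument of Proposition 2.1 from a set-level statement to a dimension-level statement by exhibiting explicit mutually inverse linear maps between the eigenspaces. Let $\mu>0$ be a common eigenvalue, and let $E_\mu^0\subset \Cc(\Vc)$ and $E_\mu^1\subset \Cc^a(\Ec)$ denote the corresponding eigenspaces of $\Delta_0$ and $\Delta_1$ respectively. From Proposition 2.1 we already know that $\mathrm{d}$ sends $E_\mu^0$ into $E_\mu^1$ and that $\delta$ sends $E_\mu^1$ into $E_\mu^0$.

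Next I would observe the key identities: since $\Delta_0=\delta \mathrm{d}$, for $f\in E_\mu^0$ we have
\[
(\delta\circ \mathrm{d})(f)=\Delta_0 f=\mu f,
\]
while since $\Delta_1=\mathrm{d}\delta$, for $\varphi\in E_\mu^1$ we have
\[
(\mathrm{d}\circ \delta)(\varphi)=\Delta_1\varphi=\mu\varphi.
\]
Because $\mu\neq 0$, these identities show that
\[
\mathrm{d}\colon E_\mu^0 \longrightarrow E_\mu^1 \quad\text{and}\quad \tfrac{1}{\mu}\,\delta\colon E_\mu^1\longrightarrow E_\mu^0
\]
are mutually inverse linear maps. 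Consequently $\mathrm{d}$ restricts to a linear bijection between the two eigenspaces, and therefore $\dim E_\mu^0=\dim E_\mu^1$, which is precisely the equality of multiplicities we want.

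There is no real obstacle here: the finite-dimensional setting makes all the relevant spaces finite-dimensional, and the identities $\delta \mathrm{d}=\Delta_0$, $\mathrm{d}\delta=\Delta_1$ do all the work. The only point to be slightly careful about is the use of $\mu\neq 0$, which is what makes $\mathrm{d}$ injective on $E_\mu^0$ (and $\delta$ injective on $E_\mu^1$); this explains why the argument fails precisely for the zero eigenvalue, consistently with the examples computed above where the multiplicities of $0$ can differ.
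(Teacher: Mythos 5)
Your proof is correct and follows essentially the same route as the paper: both arguments rest on the identities $\delta\mathrm{d}=\Delta_0$ and $\mathrm{d}\delta=\Delta_1$ restricted to the eigenspaces together with $\mu\neq 0$, the paper phrasing this as ``$\mathrm{d}$ preserves linear independence, hence $m_1\geq m_0$, and symmetrically,'' while you package the same mechanism as the mutually inverse maps $\mathrm{d}$ and $\tfrac{1}{\mu}\delta$. Your formulation is a slightly cleaner way to get both inequalities at once, but it is not a genuinely different argument.
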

\begin{Demo} 
Let $\lambda$ be a nonzero eigenvalue of multiplicity $m_0$ for $\Delta_0$ and multiplicity $m_1$
for $\Delta_1$. Then there exists a family of associated eigenfunctions $f_1,f_2,..,f_{m_0}$
linearly independent. So we obtain a family of eigenfunctions $(\mathrm{ d }f_i)_{1\leq i \leq m_0}$ associated
to the eigenvalue $\lambda$ for the operator $\Delta_1$. Let $(\alpha_i)_{1\leq i \leq m_0}$ be real numbers such that $\alpha_1 \mathrm{ d }f_1+...+\alpha_{m_0}\mathrm{ d }f_{m_0}=0$, then 
\begin{align*}
\delta(\alpha_1 \mathrm{ d }f_1+...+\alpha_{m_0}\mathrm{ d }f_{m_0})=&\lambda(\alpha_1 f_1+...+\alpha_{m_0}f_{m_0})\\
=&0
\end{align*}
but $\lambda\neq 0$ therefore $\alpha_i=0$ for all $1\leq i \leq {m_0}$
because the $f_i$ are linearly independent. This implies
that $\mathrm{ d }f_1,\mathrm{ d }f_2,..,\mathrm{ d }f_{m_0}$ are linearly independent. Hence, $m_1\geq m_0$. \\
For the second inequality, we changes the roles of $\mathrm{ d }$ and $\delta$.
\end{Demo}
\begin{Corol}
  $$\sigma_p(\Delta_0)\setminus\{0\}=\sigma_p(\Delta_1)\setminus\{0\}.$$
\end{Corol}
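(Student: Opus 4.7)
The plan is to derive the corollary as an immediate consequence of Proposition \ref{mult}, with Proposition \ref{multp} providing the refinement that the equality respects multiplicities (although only the set-level equality is what the corollary literally asserts). So really the whole proof is a one-line packaging of what has already been established.

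Concretely, I would argue as follows. Pick $\mu \in \sigma_p(\Delta_0)\setminus\{0\}$; by Proposition \ref{mult} applied to an eigenfunction $f$ of $\Delta_0$ associated to $\mu$, the 1-form $\mathrm{d}f$ is a nonzero element of $\mathcal{C}^a_0(\mathcal{E})$ (nonzero since $\|\mathrm{d}f\|^2 = \mu\|f\|^2 \neq 0$) satisfying $\Delta_1 \mathrm{d}f = \mu\,\mathrm{d}f$, hence $\mu \in \sigma_p(\Delta_1)\setminus\{0\}$. For the reverse inclusion, pick $\lambda \in \sigma_p(\Delta_1)\setminus\{0\}$ with eigen-1-form $\varphi$; then $\delta\varphi$ is a function on $\mathcal{V}$ with $\Delta_0 \delta\varphi = \delta \mathrm{d}\delta\varphi = \delta \Delta_1\varphi = \lambda \delta\varphi$, and $\delta\varphi \neq 0$ because otherwise $\Delta_1\varphi = \mathrm{d}\delta\varphi = 0$ would contradict $\lambda \neq 0$. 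Thus $\lambda \in \sigma_p(\Delta_0)\setminus\{0\}$.

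There is no real obstacle here: both directions of the inclusion are already contained in the proof of Proposition \ref{mult}, and the corollary just records the two-sided set equality. The only point that deserves a momentary check is the nonvanishing of the transported eigenvector in each direction, which is exactly where the hypothesis $\mu \neq 0$ (respectively $\lambda \neq 0$) is used and which explains why $0$ must be excluded on both sides. Proposition \ref{multp} is not needed for the statement as written, but it would upgrade the corollary to an identity of multisets if desired.
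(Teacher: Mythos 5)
Your proposal is correct and follows essentially the same route as the paper, which presents the corollary as an immediate consequence of Proposition \ref{mult}: the forward direction via $\mathrm{d}f$ and the reverse via $\delta\varphi$, with the nonvanishing of the transported eigenvector guaranteed exactly by $\mu\neq 0$ (resp.\ $\lambda\neq 0$). Your explicit check that $\delta\varphi\neq 0$ is a welcome detail that the paper's ``in the same way'' remark leaves implicit, but it is the same argument.
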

\begin{rema}
 The multiplicity of $ 0\in \sigma(\Delta_0)$ coincides with the number of connected components of the graph see \cite{moh91}.
\end{rema}
\begin{Prop}\label{zero}
If $G$ contains a cycle, then $0\in\sigma(\Delta_1)$.
\end{Prop}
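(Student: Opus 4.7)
The plan is to exploit the factorization $\Delta_1 = \mathrm{d}\delta$, which gives
$$\langle\Delta_1\varphi,\varphi\rangle_{\mathcal{E}} = \|\delta\varphi\|_{\mathcal{V}}^2,$$
so $\varphi\in\ker\Delta_1$ if and only if $\delta\varphi=0$. Hence the task reduces to exhibiting a single nonzero $\varphi\in\mathcal{C}^a_0(\mathcal{E})$ that is killed by $\delta$; such a $\varphi$ is automatically an $l^2$ eigenvector for the eigenvalue $0$, putting $0$ in $\sigma(\Delta_1)$.

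To construct $\varphi$, I would use the hypothesis that $G$ contains a cycle. First I would pass to a \emph{simple} cycle by standard reduction: whenever a vertex is visited more than once along $\gamma_{xy}$, excise the loop between the two visits, repeating until each vertex appears at most once among the $e_j^-$. This leaves an honest closed loop $\gamma = (e_1,\dots,e_n)$ with $e_j^+=e_{j+1}^-$ and $e_n^+=e_1^-$, supported on finitely many edges. I then define the ``characteristic 1-form'' of $\gamma$ by
$$
\varphi(e) = \begin{cases} 1/c(e_j) & \text{if } e = e_j,\\ -1/c(e_j) & \text{if } e = -e_j,\\ 0 & \text{otherwise.}\end{cases}
$$
This is skew-symmetric by construction, of finite support, and nonzero.

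Next I would verify $\delta\varphi \equiv 0$ using Lemma~\ref{lem1}. For a vertex $x$ not on $\gamma$, no edge $e$ with $e^+=x$ lies in $\supp\varphi$, so $\delta\varphi(x)=0$ trivially. For a vertex $x$ on $\gamma$, the simplicity of the cycle ensures that exactly two edges of $\gamma$ (together with their reversals) touch $x$: say $e_j^+=x=e_{j+1}^-$. Among edges $e$ with $e^+=x$, only $e_j$ and $-e_{j+1}$ lie in the support of $\varphi$, and
$$
m(x)\,\delta\varphi(x) = c(e_j)\varphi(e_j) + c(-e_{j+1})\varphi(-e_{j+1}) = c(e_j)\cdot\tfrac{1}{c(e_j)} - c(e_{j+1})\cdot\tfrac{1}{c(e_{j+1})} = 0,
$$
using $c(-e_{j+1})=c(e_{j+1})$. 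So $\delta\varphi=0$, and by the identity above $\Delta_1\varphi=0$, giving $0\in\sigma(\Delta_1)$.

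The only mildly subtle step is the reduction to a simple cycle; the rest is a direct check of the two telescoping terms at each cycle vertex. I would also remark that the construction makes explicit the intuition that each independent cycle of $G$ contributes one dimension to $\ker\Delta_1$, foreshadowing the comparison between the zero-multiplicity and the cycle structure discussed in the introduction.
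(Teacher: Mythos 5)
Your proof is correct and essentially coincides with the paper's: the paper uses the same cycle-supported $1$-form (constant $1$ in the simple case, $1/c(e)$ in the weighted subsection), and your vertex-wise verification that $\delta\varphi=0$ together with $\Delta_1=\mathrm{d}\delta$ is exactly the telescoping cancellation the paper performs (edge-by-edge for Proposition \ref{zero}, vertex-by-vertex in its weighted generalization). Your explicit reduction to a simple cycle and the explicit skew-symmetric extension of $\varphi$ merely make precise two points the paper leaves implicit.
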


\begin{Demo}\label{fct}
 For the second fact, let $C_\ell=(e_1,...,e_\ell)$ be a cycle in $G$, we must prove that there
 exists an eigenfunction $\varphi$ corresponding to $0$ for $\Delta_1$.  We define a
  skewsymmetric function $\varphi$ on $\mathcal{C}_{0}(\mathcal{E})$ by 
$$
\varphi(e)=\left\{
  \begin{array}{ll}
    1\;\;\;\;\;\;\;\hbox{ if } e \in C_\ell\\
    \\
   0\;\;\;\;\;\;\;\hbox{ otherwise }.
  \end{array}.
  \right.$$
We discuss three cases
\begin{itemize}
\item If $e\in C_\ell$, then there exists $i \in \lbrace 1,...,\ell\rbrace$ such that $e=e_i$ and
\begin{align*}
\Delta_1\varphi(e)&=\Delta_1\varphi(e_i)\\
&=2\varphi(e_i)-\varphi(e_{i+1})-\varphi(e_{i-1})\\
&=2-1-1=0
\end{align*}
\item If $e\notin C_\ell$ and related to an edge $e_i\in C_\ell$ means $e^\pm=e_i^\pm$, so we have
\begin{align*}
\Delta_1 \varphi(e)&=2\varphi(e)+\varphi(e_i)-\varphi(e_{i-1})\\
&=0+1-1=0
\end{align*}
\item If $e$ is not related to any edge of $C_\ell$, we obtain $\Delta_1\varphi(e)=0$.
\end{itemize}
\end{Demo}
\begin{defi}
\emph{The circuit rank}, denoted $\rr $, is the minimum number of edges that must be removed from the graph to break all its cycles, making it into a tree or a forest.
\end{defi}
\begin{Rema}
 A spanning tree $T$ of a connected graph $G$ contains $(k-1)$ edges ( where $k$ is the number of vertices and $n$ is the number of edges on $G$). Therefore, the number of edges you need to delete on $G$ in order to get a spanning tree equals to $n-(k-1)$, which is \emph{the circuit rank } of $G$. Also \emph{the circuit rank } $\rr $ is equal to the number of independent cycles in the graph, that is 
the size of a cycle basis (see \cite{Berge}).
\end{Rema}
\begin{Prop}If the graph is connected, then
the multiplicity of $0$ in the spectrum of $\Delta_1$ equals $\rr$.
\end{Prop}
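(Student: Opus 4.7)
The plan is to identify the kernel of $\Delta_{1}$ with the kernel of $\delta$ and then invoke the rank--nullity theorem together with connectedness of $G$.

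First I would observe that $\Delta_{1}$ is a non-negative symmetric operator on the finite dimensional space $\mathcal{C}^{a}(\mathcal{E})$, so its $0$-eigenspace coincides with its kernel. Using the factorization $\Delta_{1}=\mathrm{d}\delta$, for $\varphi\in \mathcal{C}^{a}(\mathcal{E})$ we have
\begin{equation*}
\langle \Delta_{1}\varphi,\varphi\rangle_{\mathcal{E}}=\langle \mathrm{d}\delta\varphi,\varphi\rangle_{\mathcal{E}}=\langle \delta\varphi,\delta\varphi\rangle_{\mathcal{V}}=\|\delta\varphi\|_{\mathcal{V}}^{2},
\end{equation*}
hence $\Delta_{1}\varphi=0$ if and only if $\delta\varphi=0$. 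Therefore the multiplicity of $0$ in the spectrum of $\Delta_{1}$ equals $\dim\ker\delta$.

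Next I would apply the rank--nullity formula to the linear map $\delta:\mathcal{C}^{a}(\mathcal{E})\to \mathcal{C}(\mathcal{V})$, whose domain has dimension $n$ (the number of unoriented edges). This gives
\begin{equation*}
\dim\ker\delta=n-\mathrm{rank}(\delta).
\end{equation*}
Since $\delta$ is the formal adjoint of $\mathrm{d}$, we have $\mathrm{rank}(\delta)=\mathrm{rank}(\mathrm{d})$. Applying rank--nullity to $\mathrm{d}:\mathcal{C}(\mathcal{V})\to\mathcal{C}^{a}(\mathcal{E})$, and using connectedness of $G$ together with the explicit formula $\mathrm{d}f(e)=f(e^{+})-f(e^{-})$, the kernel of $\mathrm{d}$ consists exactly of the constant functions, so $\dim\ker\mathrm{d}=1$ and $\mathrm{rank}(\mathrm{d})=k-1$.

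Combining these identities I obtain
\begin{equation*}
\dim\ker\Delta_{1}=\dim\ker\delta=n-(k-1)=n-k+1,
\end{equation*}
which by the remark preceding the proposition is precisely the circuit rank $\mathbf{r}$ of the connected graph $G$. The only subtle point is the identification $\ker\Delta_{1}=\ker\delta$, which requires the positivity of the inner product on $\mathcal{C}^{a}(\mathcal{E})$; beyond that, everything reduces to a clean application of rank--nullity to the discrete exterior derivative. As a sanity check, one can verify that in the example of $K_{5}$ treated above one has $n-k+1=10-5+1=6$, matching the computed multiplicity of the zero eigenvalue.
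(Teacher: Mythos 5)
Your proof is correct, and it takes a genuinely different route from the paper. The paper deduces the statement from its earlier Propositions on the coincidence of the non-zero eigenvalues of $\Delta_0$ and $\Delta_1$ with multiplicities: since the graph is connected, $0$ has multiplicity $1$ for $\Delta_0$, so $\Delta_0$ and hence $\Delta_1$ have exactly $k-1$ non-zero eigenvalues, and the multiplicity of $0$ for $\Delta_1$ is $n-(k-1)=\rr$. You instead bypass the spectral correspondence entirely: you identify $\ker\Delta_1=\ker\delta$ via $\langle\Delta_1\varphi,\varphi\rangle_{\mathcal{E}}=\|\delta\varphi\|_{\mathcal{V}}^2$, use $\operatorname{rank}(\delta)=\operatorname{rank}(\mathrm{d})$ (adjointness in finite dimensions), compute $\ker\mathrm{d}=$ constants by connectedness, and finish with rank--nullity. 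What your approach buys is self-containedness and uniformity: it does not rely on the earlier multiplicity propositions, and since it only uses the adjointness relation and the positivity of the weighted inner products, it applies verbatim to the weighted finite case treated later in the paper. What the paper's approach buys is brevity given the machinery already established, and it keeps the conceptual link between the spectra of $\Delta_0$ and $\Delta_1$ in the foreground, which is the theme of the whole section; of course the two arguments rest on the same underlying fact, namely the intertwining of $\mathrm{d}$ and $\delta$, packaged either as an eigenvalue correspondence or as a rank identity.
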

\begin{Demo}
  From Proposition \ref{mult}, the non zero eigenvalues of $\Delta_0$ and $\Delta_1$ coincide with the same multiplicity. But by hypothesis $0$ is an eigenvalue of $\Delta_0$ of multiplicity $1$,
  so $\Delta_0$ has $k-1$ non zero eigenvalues, and $\Delta_1$ as well. Hence the multiplicity of
  $0$ in the spectrum of $\Delta_1$ equal $n-(k-1)=\rr$.
\end{Demo}
\begin{Corol}
Let $G$ be a connected tree, then $0\notin \sigma(\Delta_1)$. 
\end{Corol}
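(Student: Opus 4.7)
The plan is to invoke the preceding Proposition directly: for a connected graph, the multiplicity of $0$ in the spectrum of $\Delta_1$ is exactly the circuit rank $\rr$. The task then reduces to showing that $\rr = 0$ for a tree.

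First, I would recall the standard combinatorial fact that a connected tree on $k$ vertices has exactly $k-1$ edges, so that $n = k-1$. Applying the formula $\rr = n - (k-1)$ from the remark preceding the Proposition, we get $\rr = 0$. Equivalently, one can observe that a tree is by definition acyclic, hence its cycle basis is empty, hence $\rr = 0$.

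Then, by the Proposition, the multiplicity of $0$ in $\sigma(\Delta_1)$ equals $0$, which means $0$ is not an eigenvalue of $\Delta_1$. Since $G$ is finite, $\Delta_1$ acts on the finite-dimensional space $\mathcal{C}_0^a(\mathcal{E})$ and its spectrum consists entirely of eigenvalues, so $0 \notin \sigma(\Delta_1)$.

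There is no real obstacle here; the statement is essentially a one-line consequence of the preceding Proposition together with the definition of a tree. The only thing to double-check is that we are in the finite setting so that $\sigma(\Delta_1) = \sigma_p(\Delta_1)$, and that the Proposition's hypothesis of connectedness is exactly what is assumed.
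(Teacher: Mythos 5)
Your proof is correct and is essentially the argument the paper intends: the corollary is stated as an immediate consequence of the preceding Proposition, since a connected tree has $n=k-1$ edges and hence circuit rank $\rr=0$, so $0$ has multiplicity zero in $\sigma(\Delta_1)$. Your added remark that the finite-dimensional setting guarantees $\sigma(\Delta_1)=\sigma_p(\Delta_1)$ is a sensible (if implicit in the paper) closing observation.
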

\subsection{Example}
Let us consider the complete graph $K_5$, we have $\sigma(\Delta_1)=\{0,5\}$, with $0$ is of multiplicity $6$ and the circuit rank of $K_5$ is $r=6$. 

\subsection{Weighted finite graph}
Let $(G,m,c)$ be a weighted finite graph with $k$ vertices and $n$ edges. All the previous
results can be generalized.\\
 We recall from Definition $\ref{lap1}$, the Laplacian on 1-forms $\Delta_{1}$ defined by $\mathrm{d}\delta$ on $\mathcal{C}_{0}(\mathcal{E})$ is given by
\begin{eqnarray*}
\Delta_{1}\varphi(e)&=& \mathrm{d}(\delta \varphi) (e)\\
&=& \delta \varphi(e^{+})-\delta \varphi(e^{-})\\
&=& \frac{1}{{m}(e^{+})}  \sum_{e_{1}, e_{1}^{+}=e^{+}} c(e_{1}) \varphi (e_{1})-\frac{1}{{m}(e^{-})}  \sum_{e_{2}, e_{2}^{+}=e^{-}} c(e_{2}) \varphi (e_{2}).
\end{eqnarray*}

Then we obtain
$$ \Delta_{1}=\hbox {I} +\hbox {A}_{1}$$
where $\hbox {I}$ is the diagonal matrix of size n and $\hbox {A}_{1}$ is the adjacency matrix on 1-forms. \\
More precisely, we have $\hbox {I}=(a_{i,i})_{1\leq i\leq n}=\left( c(e_i)(\frac{1}{m(e_{i}^{+})}+\frac{1}{m(e_{i}^{-})})\right) _{1\leq i\leq n}$ and $\hbox {A}_1=(a_{i,j})_{1\leq i,j\leq n}\;i \neq j$ and $e_{i}$ and $e_{j}$ are two edges \emph{with commun vertex}, such that

$$
a_{i,j}=\left\{
  \begin{array}{ll}
    \frac{c(e_j)}{m(e_{i}^{\pm})}\;\;\;\;\;\;\;\hbox{ if } e_{j}^{\pm}=e_{i}^{\pm}\\
    \\
   - \frac{c(e_j)}{m(e_{i}^{\mp})}\;\;\;\;\hbox{ if } e_{j}^{\pm}=e_{i}^{\mp}\\
    \\
   0\;\;\;\;\;\;\;\hbox{ otherwise }.
\end{array}
\right.$$
\medskip
\bigskip
\\
Moreover the proofs are the same except for Proposition \ref{zero}. If
the graph contains a cycle $C_\ell=(e_1,...,e_\ell)$, we consider the 1-form
$$
\varphi(e)=\left\{
  \begin{array}{ll}
    \frac{1}{c(e)}\;\;\;\;\;\;\;\hbox{ if } e \in C_\ell\\
    \\
   0\;\;\;\;\;\;\;\hbox{ otherwise }.
  \end{array}\right.
  $$
  Then, if $x=e_i^+=e_{i+1}^-$ for $i \in \lbrace 1,...,\ell-1\rbrace$ (with the convention
  $e_{\ell^{+}}=e_1^{-}$) wich gives that
  \[\delta\varphi (x)=\frac{1}{m(x)}(1-1)=0\]
  and if $x$ does not belong to the cycle then $\delta\varphi (x)=0$ also. So we have the following theorem:
  \begin{Theo}Let $(G,m,c)$ be a weighted finite graph, then
    \[\sigma_p(\Delta_0)\setminus\{0\}=\sigma_p(\Delta_1)\setminus\{0\}.\]
    Moreover the multiplicity of $0\in\sigma_p(\Delta_0)$ equals the number of
    connected components of the graph; if $G$ possesses a cycle then $0\in\sigma_p(\Delta_1)$
    and if the graph is connected then the multiplicity of $0\in\sigma_p(\Delta_1)$
    equals the circuit rank of the underlying combinatorial graph.
    \end{Theo}
\section{Infinite case}\label{infinite}
The purpose of this part is to extend the previous results to the infinite weighted graph and to find the relation between the spectrum of $\Delta_0$ and $\Delta_1$. We are interested in the study of the problem of 0-spectrum. 
\subsection{Weyl's  criterion}
We consider the unbounded and self-adjoint operator on a Hilbert space, then we can use Weyl's criterion to characterize its spectrum like in \cite{Ay} and \cite{HS}.\\
\textbf{\underline{\emph{Weyl's criterion :}}} Let $\mathcal{H}$ be a separable Hilbert space, and let $\Delta$ be an unbounded self-adjoint operator on $\mathcal{H}$. Then
$\lambda$ is in  the spectrum of $\Delta$ if and only if there exists a sequence $(f_{n} )_{n\in \N}$ so that $\norm{f_{n}}=1$ and $\lim\limits_{n \to \infty} \norm {(\Delta-\lambda)f_{n}} = 0.$

We denote $\sigma(\Delta)$ the spectrum of $\Delta$ and we set
\begin{itemize}
\item $\sigma_{d}(\Delta)$ is the set of $\lambda \in \sigma(\Delta)$ which is an isolated point and an eigenvalue with finite multiplicity.\\
\item $\sigma_{ess} (\Delta ):=\sigma(\Delta) \setminus \sigma_{d}(\Delta)$.
\end{itemize}
\textbf{\underline{\emph{Weyl's criterion for essential spectrum :}} }Let $\Delta$ be an unbounded self-adjoint operator. Then we have $\lambda \in \sigma_{ess} (\Delta )$ if and only if there exists a sequence $(f_{n} )_{n\in \N} \in D(\Delta)$ so that $\norm{f_{n}}=1$, $(f_{n} )_{n\in \N}$ \textit{weakly converges to zero} and $\lim\limits_{n \to \infty} \norm {(\Delta-\lambda)f_{n}} = 0.$
\begin{Prop} If the operators $\Delta_0$ and $\Delta_1$ are essentially selfadjoint then
$$\sigma_{ess}(\Delta_{1})\setminus \{0\}= \sigma_{ess}  (\Delta_{0})\setminus \{0\}.$$
\end{Prop}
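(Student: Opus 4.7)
My plan is to extend the intertwining argument from Propositions \ref{mult} and \ref{multp} to the infinite setting by applying it to Weyl sequences instead of eigenfunctions. The key identity is that $\mathrm{d}$ intertwines $\Delta_0$ and $\Delta_1$, in the sense that $\Delta_1 \mathrm{d} = \mathrm{d}\delta\mathrm{d} = \mathrm{d}\Delta_0$ on $\mathcal{C}_0(\mathcal{V})$, and symmetrically $\delta\Delta_1 = \Delta_0\delta$ on $\mathcal{C}_0^a(\mathcal{E})$. Using Weyl's criterion for essential spectrum, it will suffice to convert a Weyl sequence $(f_n)$ for $\Delta_0$ at a nonzero value $\lambda$ into a Weyl sequence for $\Delta_1$ at the same value, via $\varphi_n := \mathrm{d} f_n$, then renormalize; and to do the symmetric construction with $\delta$ for the reverse inclusion.

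Fix $\lambda \in \sigma_{ess}(\Delta_0)\setminus\{0\}$. Since the closure $\overline{\Delta_0}$ is self-adjoint, I would first use the spectral theorem to choose a Weyl sequence $(f_n)$ supported in the range of the spectral projection $E_{[\lambda/2,\lambda+1]}$, ensuring $\|f_n\|_{\mathcal{V}}=1$, $f_n\rightharpoonup 0$, $\|(\Delta_0-\lambda)f_n\|_{\mathcal{V}}\to 0$, and the uniform bound $\|\Delta_0 f_n\|_{\mathcal{V}}\le C$ for some constant $C$ depending only on $\lambda$. By essential self-adjointness, a standard cut-off/density argument lets me assume $f_n\in\mathcal{C}_0(\mathcal{V})$ while preserving these estimates up to $o(1)$. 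I then set $\varphi_n := \mathrm{d}f_n\in\mathcal{C}_0^a(\mathcal{E})$ and check three properties. First,
\[
\|\varphi_n\|_{\mathcal{E}}^2 = \langle \mathrm{d}f_n,\mathrm{d}f_n\rangle_{\mathcal{E}} = \langle \Delta_0 f_n,f_n\rangle_{\mathcal{V}} \longrightarrow \lambda > 0,
\]
so $\varphi_n$ is bounded below in norm. Second, for any test $\psi\in\mathcal{C}_0^a(\mathcal{E})$, $\langle\varphi_n,\psi\rangle_{\mathcal{E}} = \langle f_n,\delta\psi\rangle_{\mathcal{V}}\to 0$ by weak convergence of $(f_n)$, and since $\mathcal{C}_0^a(\mathcal{E})$ is dense in $\ell^2(\mathcal{E})$ and $(\varphi_n)$ is norm-bounded, it follows that $\varphi_n\rightharpoonup 0$. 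Third, the intertwining identity gives
\[
\|(\Delta_1-\lambda)\varphi_n\|_{\mathcal{E}}^2 = \|\mathrm{d}(\Delta_0-\lambda)f_n\|_{\mathcal{E}}^2 = \langle \Delta_0(\Delta_0-\lambda)f_n,(\Delta_0-\lambda)f_n\rangle_{\mathcal{V}} \le (C+|\lambda|)\,\|(\Delta_0-\lambda)f_n\|_{\mathcal{V}}^2 \longrightarrow 0,
\]
where the upper bound uses the spectral control $\|\Delta_0(\Delta_0-\lambda)f_n\|\le (C+|\lambda|)\|(\Delta_0-\lambda)f_n\|$ from the initial cutoff. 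Normalising $\tilde\varphi_n := \varphi_n/\|\varphi_n\|_{\mathcal{E}}$ then yields a Weyl sequence for $\Delta_1$ at $\lambda$ that converges weakly to $0$, so $\lambda\in\sigma_{ess}(\Delta_1)$.

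The reverse inclusion is symmetric: starting from $\lambda\in\sigma_{ess}(\Delta_1)\setminus\{0\}$ and a Weyl sequence $(\psi_n)$ spectrally localised for $\overline{\Delta_1}$, set $g_n := \delta\psi_n$; then $\|g_n\|_{\mathcal{V}}^2 = \langle\Delta_1\psi_n,\psi_n\rangle_{\mathcal{E}} \to \lambda$, weak convergence $g_n\rightharpoonup 0$ follows from $\langle g_n,f\rangle_{\mathcal{V}} = \langle \psi_n,\mathrm{d}f\rangle_{\mathcal{E}}\to 0$, and $(\Delta_0-\lambda)g_n = \delta(\Delta_1-\lambda)\psi_n$ is controlled by the same spectral bound applied to $\Delta_1$. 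The main obstacle is precisely this need to extract a Weyl sequence on which $\Delta_0$ (respectively $\Delta_1$) remains uniformly bounded, so that $\|\mathrm{d}(\Delta_0-\lambda)f_n\|$ (respectively $\|\delta(\Delta_1-\lambda)\psi_n\|$) can be shown to vanish; this is exactly where the hypothesis of essential self-adjointness enters, since it provides the self-adjoint closure with its spectral calculus. The excluded value $\lambda=0$ corresponds to the vanishing of $\|\mathrm{d}f_n\|$ and $\|\delta\psi_n\|$, which is why the argument breaks down there.
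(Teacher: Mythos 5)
Your construction is essentially the paper's own proof: transport a Weyl sequence for $\Delta_0$ at $\lambda\neq 0$ by $\mathrm{d}$ (and by $\delta$ for the reverse inclusion), use $\|\mathrm{d}g\|_{\mathcal{E}}^2=\langle\Delta_0 g,g\rangle_{\mathcal{V}}$ together with a spectral localization of the sequence to bound $\|\mathrm{d}(\Delta_0-\lambda)f_n\|_{\mathcal{E}}$ by a constant times $\|(\Delta_0-\lambda)f_n\|_{\mathcal{V}}$, and use $\|\mathrm{d}f_n\|_{\mathcal{E}}^2\to\lambda>0$ for the normalization and the weak convergence of $\varphi_n$. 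The only difference in execution is cosmetic: the paper takes an arbitrary Weyl sequence and then truncates its high-energy part with the spectral projector $\mathrm{P}_{]-\infty,\bar\lambda[}$, whereas you localize to $[\lambda/2,\lambda+1]$ from the start; both serve exactly the same purpose.

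The one step I would not accept as written is the reduction ``by a standard cut-off/density argument'' to $f_n\in\mathcal{C}_0(\mathcal{V})$ ``preserving these estimates up to $o(1)$''. Essential selfadjointness gives that $\mathcal{C}_0(\mathcal{V})$ is a core for $\overline{\Delta_0}$, i.e.\ approximation in the graph norm; this preserves $\|f_n\|_{\mathcal{V}}=1$, the weak convergence and $\|(\Delta_0-\lambda)f_n\|_{\mathcal{V}}\to 0$, but it does \emph{not} preserve the spectral control $\|\Delta_0(\Delta_0-\lambda)f_n\|_{\mathcal{V}}\leq(C+|\lambda|)\|(\Delta_0-\lambda)f_n\|_{\mathcal{V}}$ that you invoke afterwards: the error in that bound involves $\Delta_0^2$ of the approximant, which the graph norm does not see, the finitely supported approximants no longer lie in $\Ran E_{[\lambda/2,\lambda+1]}$, and $\mathcal{C}_0(\mathcal{V})$ being a core for $\overline{\Delta_0}$ does not make it a core for $\overline{\Delta_0}^{\,2}$. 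The cut-off is in fact unnecessary: keep the spectrally localized $f_n$ in the domain of the selfadjoint extension, check (this is where essential selfadjointness of $\Delta_1$ is used) that $\mathrm{d}f_n$ belongs to the domain of $\overline{\Delta_1}$ with $\overline{\Delta_1}\,\mathrm{d}f_n=\mathrm{d}\,\overline{\Delta_0}f_n$, and apply Weyl's criterion with these domain vectors --- which is how the paper proceeds. With that repair (and the symmetric one for $\delta$) your argument coincides with the published proof.
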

\begin{Demo} Denote $\mathcal D(\Delta_0)$, resp. $\mathcal D(\Delta_1)$, the domains of the
  selfadjoint extensions of $\Delta_0$ resp. $\Delta_1$.
  
  Let $\lambda \neq 0$ in the essential spectrum of $\Delta_{0}$ then, by Weyl's criterion, there
  exists a sequence $(f_{n} )_{n\in \N}$ in $\mathcal D(\Delta_0)$ such that
  $$n\in \N, ~~ \norm{f_{n}}_{\mathcal{V}}=1 ,\; \lim\limits_{n \to \infty}\langle f_n,f \rangle_{\mathcal{V}} =0,\;\forall f \in l^{2}(\mathcal{V})  \hbox{ and } \lim\limits_{n \to \infty}\norm{(\Delta_{0}-\lambda)f_{n}}_{\mathcal{V}}=0.$$ So, we want to find a sequence $(\varphi_{n})_{n}$ of $\mathcal{C}_{0}(\mathcal{E})$ such that 
$$n\in \N, ~~\norm{\varphi_{n}}_{\mathcal{E}}= 1 \; \lim\limits_{n \to \infty}\langle \varphi_n,\varphi \rangle_{\mathcal{E}} =0,\;\forall \varphi \in l^{2}(\mathcal{E})\hbox{ and } \lim\limits_{n \to \infty} \norm{(\Delta_{1}-\lambda)\varphi_{n}}_{\mathcal{E}}=0.$$

As $\Delta_0$ is essentially selfadjoint , $\mathrm{ d }f\in l^2(\Ec)$ and we set
$$\varphi_{n}:=\frac{\mathrm{ d }f_{n}}{\norm{\mathrm{ d }f_{n}}_{\mathcal{E}}}.$$

$\bullet$ So $\|\varphi_n\|_{\mathcal{E}}=1, ~~n\in\N$, we first prove that $\varphi_n,$ converges weakly to 0.

(1) We see that $\langle(\Delta_{0}-\lambda)f_n,f_n\rangle_{\mathcal{V}}=\|\mathrm{ d }f_n\|_{\mathcal{E}}^2 -\lambda \to 0$ when $n\to \infty$, by
hypothesis on the sequence $ (f_{n} )_{n\in \N}$. But $\lambda\neq 0$ so there exist $n_0$ and two positive constants
$\alpha$ ,$\beta$ such that
\[\forall n\geq n_0 ,~\quad 0<\alpha<\|\mathrm{ d }f_n\|_{\mathcal{E}}<\beta.\]

(2) Let $\varphi\in \mathcal D(\Delta_1)$ then, for any $n\geq n_0$
\begin{align*}
  \langle \varphi_n,\varphi\rangle_{\mathcal{E}}&=\frac{1}{\|\mathrm{ d }f_n\|}\langle f_n,\delta\varphi\rangle_{\mathcal{V}}\\
  &\leq \frac{1}{\alpha}\langle f_n,\delta\varphi\rangle_{\mathcal{V}}\overset{n\to\infty}\longrightarrow 0.
\end{align*}

(3) for any $\varphi \in l^{2}(\mathcal{E})$ and any $\eps>0$, as $\mathcal D(\Delta_1)$
is dense in $l^{2}(\mathcal{E})$ there exits $\bar\varphi\in\mathcal D(\Delta_1)$ such that 
\begin{align*}\|\varphi-\bar\varphi\|_{\mathcal{E}}&\leq \eps/2\Rightarrow\\
  |\langle\varphi_n,\varphi\rangle_{\mathcal{E}}|&\leq |\langle\varphi_n,\bar\varphi\rangle_{\mathcal{E}}|+
  |\langle\varphi_n,\varphi-\bar\varphi\rangle_{\mathcal{E}}|\\
  &\leq |\langle\varphi_n,\bar\varphi\rangle_{\mathcal{E}}|+\eps/2.
\end{align*}
But there exists $n_1\geq n_0$ such that
\[\forall n\geq n_1\quad |\langle\varphi_n,\bar\varphi\rangle_{\mathcal{E}}|\leq \eps/2\]
by the application of (2). So we have proved:
\[\forall\eps>0,\; \exists\, n_1>0 \hbox{ such that }\; \forall n\geq n_1\quad
|\langle\varphi_n,\varphi\rangle_{\mathcal{E}}|\leq \eps.\]

$\bullet$ Finally we prove that we can choose $(f_n)$ such that moreover
$\lim_{n\to\infty}\|(\Delta_1-\lambda)\varphi_n\|_{\mathcal{E}}=0$.

Pick $\bar\lambda >\lambda$ and denote $\mathrm P_{\Omega}$, the
spectral projector of $\Delta_0$ relative to the borelian $\Omega$.
Applying the Spectral Theorem (Thm VIII.5 of \cite{RS}) we have
that
\[f_n=\mathrm P_{]-\infty,\bar\lambda[ }(f_n)+\mathrm P_{[\bar\lambda,+\infty[ }(f_n)\]
    an orthogonal decomposition and we know
    that $\Delta_0\geq \bar\lambda$ in $\Ran(P_{[\bar\lambda,+\infty[ })$.
        Then
        \begin{align*}
          \|(\Delta_0-\lambda)(f_n)\|_{\mathcal{V}}^2&= \|(\Delta_0-\lambda)\mathrm P_{]-\infty,\bar\lambda[ }(f_n)\|_{\mathcal{V}}^2+
          \|(\Delta_0-\lambda)\mathrm P_{[\bar\lambda,+\infty[ }(f_n)\|_{\mathcal{V}}^2\\
              &\geq \|(\Delta_0-\lambda)\mathrm P_{]-\infty,\bar\lambda[ }(f_n)\|_{\mathcal{V}}^2+(\bar\lambda-\lambda)_{\mathcal{V}}^2
              \|\mathrm P_{[\bar\lambda,+\infty[ }(f_n)\|_{\mathcal{V}}^2
        \end{align*}

        So we have
        \[\lim_{n\to\infty}\|\mathrm P_{[\bar\lambda,+\infty[ }(f_n)\|_{\mathcal{V}}=0
            \Rightarrow \lim_{n\to\infty}\|\mathrm P_{]-\infty,\bar\lambda[ }(f_n)\|_{\mathcal{V}}=1
            \]

            It means that we can work with
            $\bar f_n=\mathrm P_{]-\infty,\bar\lambda[ }(f_n)/\|\mathrm P_{]-\infty,\bar\lambda[ }(f_n)\|_{\mathcal{V}}$
            witch satisfies also
            \begin{equation*}
              \|\bar f_n\|_{\mathcal{V}}=1,\quad \bar f_n\overset{n\to\infty}\rightharpoonup 0\; \hbox{ weakly,}\quad
              \|(\Delta_0-\lambda) \bar f_n\|_{\mathcal{V}}\overset{n\to\infty}\longrightarrow 0.
            \end{equation*}

            Define now $\bar\varphi_n=\mathrm{ d }\bar f_n/\|\mathrm{ d }\bar f_n\|_{\mathcal{E}}$.
Then, as before, $\|\bar\varphi_n\|_{\mathcal{E}}=1$, $\bar \varphi_n\overset{n\to\infty}\rightharpoonup 0$
weakly, but now, as
\begin{equation}\forall f\in\Ran \mathrm P_{]-\infty,\bar\lambda[}, \quad
    \|\mathrm{ d }f\|_{\mathcal{E}}^2=\langle\Delta_0 f,f\rangle_{\mathcal{V}}\leq\|\Delta_0 f\|\,\|f\|_{\mathcal{V}}\leq\bar\lambda\|f\|_{\mathcal{V}}^2
\end{equation}
and also $(\Delta_0-\lambda)\bar f_n\in\Ran \mathrm P_{]-\infty,\bar\lambda[}$ we
have
  \[\|(\Delta_1-\lambda)\bar\varphi_n\|_{\mathcal{E}}=\frac{\|\mathrm{ d }(\Delta_0-\lambda)\bar f_n\|_{\mathcal{E}}}{\|\mathrm{ d }\bar f_n\|_{\mathcal{E}}}
  \leq\sqrt{\bar\lambda}\frac{\|(\Delta_0-\lambda)\bar f_n\|_{\mathcal{V}}}{\|\mathrm{ d }\bar f_n\|_{\mathcal{E}}}
  \leq\frac{\sqrt{\bar\lambda}}{\alpha}\|(\Delta_0-\lambda)\bar f_n\|_{\mathcal{V}}\overset{n\to\infty}\longrightarrow 0.\]

  If now $\mu\neq 0$ in the essential spectrum of $\Delta_{1}$ we can do the
  same thing inverting the roles of $\mathrm{ d }$ and $\delta$.

\end{Demo}

\begin{Prop}
Let $G$ be a weighted, connected, locally finite graph containing at least one cycle, then $0$ is an eigenvalue of $\Delta_1$.
\end{Prop}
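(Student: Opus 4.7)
The plan is to adapt the construction already used in the weighted finite case, noting that the cycle we are given is intrinsically a finite object, so the resulting 1-form has finite support and automatically lies in $\mathcal{C}_0^a(\mathcal{E}) \subset l^2(\mathcal{E})$. This way, the eigenvalue $0$ is produced by an honest eigenvector in the initial domain of $\Delta_1$, not merely in the spectrum.

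First I would pick a cycle $C_\ell = (e_1,\dots,e_\ell)$ in $G$, guaranteed by hypothesis, with the convention $e_i^+ = e_{i+1}^-$ for $1 \le i \le \ell-1$ and $e_\ell^+ = e_1^-$. Then I would define
\[
\varphi(e) = \begin{cases} \dfrac{1}{c(e)} & \text{if } e \in C_\ell, \\[4pt] -\dfrac{1}{c(e)} & \text{if } -e \in C_\ell, \\[4pt] 0 & \text{otherwise,} \end{cases}
\]
which is skew-symmetric by construction and supported on the $2\ell$ oriented edges corresponding to $C_\ell$. Since its support is finite, $\varphi \in \mathcal{C}_0^a(\mathcal{E}) \subset l^2(\mathcal{E})$, and $\varphi \neq 0$.

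Next I would verify that $\delta\varphi \equiv 0$. If $x \notin C_\ell$ (viewed as a set of vertices), then no edge $e$ with $e^+ = x$ has $\varphi(e)\neq 0$, so $\delta\varphi(x)=0$ is immediate. If $x$ is a vertex of the cycle, say $x = e_i^+ = e_{i+1}^-$, then among the edges $e$ with $e^+ = x$ the only two carrying nonzero values of $\varphi$ are $e_i$ itself and $-e_{i+1}$; using Lemma \ref{lem1},
\[
\delta\varphi(x) = \frac{1}{m(x)}\Bigl( c(e_i)\,\varphi(e_i) + c(-e_{i+1})\,\varphi(-e_{i+1})\Bigr) = \frac{1}{m(x)}\Bigl(c(e_i)\cdot\tfrac{1}{c(e_i)} - c(e_{i+1})\cdot\tfrac{1}{c(e_{i+1})}\Bigr) = 0,
\]
where we used $c(-e)=c(e)$ and the skew-symmetry of $\varphi$. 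The case of $x=e_\ell^+=e_1^-$ is treated identically thanks to the cyclic convention.

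Consequently $\Delta_1 \varphi = \mathrm{d}(\delta\varphi) = 0$ with $\varphi \neq 0$, so $0 \in \sigma_p(\Delta_1)$. I do not expect a real obstacle here: the argument is essentially the same as in the finite weighted case, and the point is only to observe that local finiteness of $G$ is not needed for this particular conclusion, while the finiteness of the cycle itself guarantees membership in $l^2(\mathcal{E})$, so no summability issue arises in the passage from the finite to the infinite setting.
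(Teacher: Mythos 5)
Your proof is correct and follows essentially the same route as the paper: it takes the 1-form equal to $1/c(e)$ on the edges of a cycle (extended skew-symmetrically, zero elsewhere), checks via Lemma \ref{lem1} that $\delta\varphi=0$ at each vertex, and concludes $\Delta_1\varphi=\mathrm{d}\delta\varphi=0$ with $\varphi\in\mathcal{C}^a_0(\mathcal{E})\setminus\{0\}$. Your version is slightly more explicit about the skew-symmetric extension and about why finite support settles the $l^2$ issue, but the construction and verification are the same as in the paper's proof.
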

\begin{Demo}
Note $C_n=(e_1,e_2,...,e_n)$ a cycle of $G$.\\
 Let $\varphi$ be a function in $\mathcal{C}^{a}_0(\mathcal{E})$ defined by
$$\varphi(e)=\left\lbrace \begin{array}{cc}
\dfrac{1}{c(e)}&\text{ if } e=e_i, ~i=1,..,n\\
0 &\text{ otherwise }
\end{array}\right. $$
 We follow the same approach as that of the proof of proposition \ref{zero} fo find that $\Delta_1\varphi (e)=0$ for all $e\in\mathcal{E}$.
\end{Demo}

\subsection{Spectrum on $\boldsymbol\chi$-complete Graph}
In the unbounded case we need at least some completeness.
In \cite{AT} we have introduced the notion of $\chi$-completeness. Let us 
recall the definition.
\begin{defi}The graph $G$ is {\em $\chi-$complete}
if there exists an increasing sequence of finite sets 
$(B_n)_{n\in \N}$ such that
$\mathcal{V} =\cup_{n} B_n$ and there exist related functions $\chi_n$ satisfying the
following three conditions:
\begin{enumerate}
\item  $\chi_n\in \mathcal{C}_0(\mathcal{V} ),\, 0\leq\chi_n\leq 1$
\item  $v\in B_n ~\Rightarrow ~\chi_n(v)=1$
\item  $\displaystyle\exists C>0, \forall n\in \N,\, x\in \mathcal{V}\; ,
\frac{1}{m(x)}\sum_{e,e^\pm=x}c(e)\di\chi_n(e)^2\leq C.$
\end{enumerate}
\end{defi}
We know by \cite{AT} that on a $\chi-$complete graph the Gau\ss-Bonnet 
operator, and then the Laplacian is essentially self adjoint, let
$\Delta=\Delta_0\oplus \Delta_1$ its unique extension. Moreover it was proved in
\cite{LSW21} that $\chi-$completeness is equivalent to a geometric property : the
existence of an intrinsic pseudo-metric with finite balls.
\begin{Prop}Let $G=(\mathcal{V},\mathcal{E})$ be a connected, locally finite $\chi-$complete 
graph. Suppose that $0\notin\sigma(\Delta_0)$, then
$$\inf(\sigma(\Delta_1)\backslash\{0\})=\inf\sigma(\Delta_0)>0.$$
\end{Prop}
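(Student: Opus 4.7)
The plan is to reduce the statement to two subclaims: first, that $\sigma(\Delta_0)\setminus\{0\}=\sigma(\Delta_1)\setminus\{0\}$; second, that $\mu:=\inf\sigma(\Delta_0)$ is actually attained in $\sigma(\Delta_0)$ and is strictly positive. Granting these, $\mu\in\sigma(\Delta_1)\setminus\{0\}$ gives $\inf(\sigma(\Delta_1)\setminus\{0\})\leq\mu$, while any nonzero $\lambda\in\sigma(\Delta_1)$ lies in $\sigma(\Delta_0)$ by the spectral equality and hence satisfies $\lambda\geq\mu$, yielding the desired identity.

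For the first subclaim I would redo the Weyl-sequence construction of the preceding proposition, but targeting the full spectrum $\sigma$ in place of $\sigma_{\text{ess}}$, which lets me drop the weak-convergence requirement and retain only the two metric conditions in Weyl's criterion. Starting from $\lambda\neq 0$ in $\sigma(\Delta_0)$ and a singular sequence $(f_n)$ with $\|f_n\|_\mathcal{V}=1$ and $\|(\Delta_0-\lambda)f_n\|_\mathcal{V}\to 0$, I would spectrally truncate with $\mathrm P_{]-\infty,\bar\lambda[}$ for some $\bar\lambda>\lambda$ and transfer through
\[\bar\varphi_n=\frac{\mathrm d\bar f_n}{\|\mathrm d\bar f_n\|_\mathcal{E}}.\]
The identity $\|\mathrm d\bar f_n\|_\mathcal{E}^2=\langle\Delta_0\bar f_n,\bar f_n\rangle_\mathcal{V}\to\lambda>0$ controls the denominator, while the inequality $\|\mathrm d g\|_\mathcal{E}\leq\sqrt{\bar\lambda}\|g\|_\mathcal{V}$ valid on $\Ran\mathrm P_{]-\infty,\bar\lambda[}$ controls the numerator and yields $\|(\Delta_1-\lambda)\bar\varphi_n\|_\mathcal{E}\to 0$. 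The reverse inclusion follows by exchanging the roles of $\mathrm d$ and $\delta$.

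The second subclaim is immediate: $\Delta_0$ is selfadjoint and non-negative, so $\sigma(\Delta_0)\subset[0,+\infty[$ is closed, $\mu=\inf\sigma(\Delta_0)$ is attained in it, and the hypothesis $0\notin\sigma(\Delta_0)$ forces $\mu>0$.

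The \emph{main obstacle} I anticipate is the unboundedness of $\Delta_0$ and $\Delta_1$: one cannot simply invoke the algebraic identity $\Delta_1\mathrm d f=\mathrm d\Delta_0 f$, since the relevant objects need not remain in suitable domains, nor can one use a naive bound of the form $\|\mathrm d g\|_\mathcal{E}\leq C\|g\|_\mathcal{V}$. The spectral truncation is precisely the device that places $\bar f_n$ in a subspace on which $\mathrm d$ acts boundedly, and the essential selfadjointness of $\Delta_0$ (and of $\Delta_1$, for the reverse inclusion) supplied by $\chi$-completeness is exactly what makes this spectral calculus legal. Once that is in place, the argument is a routine transcription of the preceding proposition with the weak-convergence clause omitted.
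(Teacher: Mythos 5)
Your proposal is correct, but it takes a genuinely different route from the paper. You establish the stronger intermediate fact $\sigma(\Delta_0)\setminus\{0\}=\sigma(\Delta_1)\setminus\{0\}$ by transporting Weyl sequences through $f\mapsto \mathrm{d}f/\|\mathrm{d}f\|_{\mathcal{E}}$ after the spectral truncation by $\mathrm P_{]-\infty,\bar\lambda[}$, i.e.\ you rerun the paper's proof of the essential-spectrum proposition with the weak-convergence clause dropped; the infimum identity then falls out because $\sigma(\Delta_0)$ is closed, nonempty, contained in $[0,+\infty[$ and avoids $0$. This is the supersymmetric statement $\sigma(\delta\mathrm{d})\setminus\{0\}=\sigma(\mathrm{d}\delta)\setminus\{0\}$, and the hypothesis $0\notin\sigma(\Delta_0)$ enters only at the very last step. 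The paper argues instead variationally at the bottom of the spectrum: invertibility of $\Delta_0$ gives $h:=\inf\sigma(\Delta_0)>0$ and, via the spectral theorem, the inequality $\|\mathrm{d}f\|_{\mathcal{E}}^2\geq h\|f\|_{\mathcal{V}}^2$; then, using the decompositions $l^2(\mathcal{E})=\Ker\delta\oplus\overline{\Im\mathrm{d}}$ and $l^2(\mathcal{V})=\overline{\Im\delta}$ (available because $\chi$-completeness forces $\mathrm{d}_{min}=\mathrm{d}_{max}$ and $\delta_{min}=\delta_{max}$), it approximates a unit $\varphi\perp\Ker\delta$ by $\mathrm{d}f_n$ to get $\|\delta\varphi\|_{\mathcal{V}}\geq\sqrt h$, which bounds $\sigma(\Delta_1)\setminus\{0\}$ below by $h$, the reverse inequality being obtained symmetrically. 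Your route buys a sharper conclusion (full coincidence of the spectra off zero, which also feeds \cref{vp}-type consequences) at the cost of repeating the domain bookkeeping of the preceding proof: you do need, and correctly flag, essential self-adjointness of both Laplacians, supplied by $\chi$-completeness through the Gauss--Bonnet operator, to justify $\mathrm{d}\bar f_n\in\mathcal D(\Delta_1)$ and $\Delta_1\mathrm{d}\bar f_n=\mathrm{d}\Delta_0\bar f_n$ on $\Ran\mathrm P_{]-\infty,\bar\lambda[}$, together with the bound $\|\mathrm{d}g\|_{\mathcal{E}}^2=\langle\Delta_0 g,g\rangle_{\mathcal{V}}\leq\bar\lambda\|g\|_{\mathcal{V}}^2$ there; the paper's form-theoretic argument works directly with quadratic forms and the kernel decomposition and never needs Weyl sequences at all.
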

\begin{proof}
As the graph is $\chi-$complete, $\di_{min}=\di_{max}$ and 
$\delta_{min}=\delta_{max}.$ As $\Delta_0$ is invertible we have
$\Ker\di=\{0\}$ and thus the decomposition
\begin{equation*}l^2(\mathcal{V})=\overline{\Im\delta} \quad 
l^2(\mathcal{E})=\Ker\delta\oplus \overline{\Im\di}.
\end{equation*}
But $\Delta_0$ invertible tells us more :
\begin{equation*}
\exists h>0,\;\forall f\in C_0(\mathcal{V})\quad
\|\Delta_0(f)\|_{\mathcal{V}}\geq h\|f\|_{\mathcal{V}}.
\end{equation*}
Thus $\inf\sigma(\Delta_0)>0$. Moreover, by the spectral theorem, 
$\sqrt h$ is a lower bound of $\sqrt\Delta_0,$
but $$\|\sqrt\Delta_0(f)\|_{\mathcal{V}}^2=\langle \Delta_0(f),f \rangle_{\mathcal{V}}=\|\di(f)\|_{\mathcal{E}}^2.$$ It means that
\begin{equation}\label{cheeger}
\forall f\in C_0(\mathcal{V})\quad
\|\di(f)\|_{\mathcal{E}}^2\geq h\|f\|_{\mathcal{V}}^2
\end{equation}
and, indeed, this inequality is true for any function in the domain
of $\di_{max}.$

Now let $\varphi\in(\Ker\delta)^\perp\cap\delta$ with
$\|\varphi\|_{\mathcal{E}}=1.$  As $C_0(\mathcal{V})$ is dense in $l^2(\mathcal{V})$
$$\|\delta(\varphi)\|_{\mathcal{V}}=\sup_{f\in C_0(\mathcal{V}),\|f\|_{\mathcal{V}}=1}\langle\delta(\varphi),f\rangle_{\mathcal{V}}
=\sup_{f\in C_0(\mathcal{V}),\|f\|_{\mathcal{V}}=1}\langle\varphi,\di(f)\rangle_{\mathcal{E}}$$
But, as $\varphi\in\overline{\Im(\di)}$
$$\exists(f_n)_{n\in\N}, f_n\in C_0(\mathcal{V})\quad\lim_{n\to\infty}\|\di(f_n)-\varphi\|_{\mathcal{E}}=0.
$$ 

We have then
\begin{equation}\forall n\in\N,\; \langle\varphi,\di(f_n)\rangle_{\mathcal{E}}\leq\|\delta(\varphi)\|_{\mathcal{V}}\,\|f_n\|_{\mathcal{V}}.
\end{equation}
We apply now the inequality (\ref{cheeger}) to $f_n$
\begin{equation}\forall n\in\N,\; \langle\varphi,\di(f_n)\rangle_{\mathcal{E}}\leq\|\delta(\varphi)\|_{\mathcal{V}}\,
\frac{\|\di(f_n)\|_{\mathcal{E}}}{\sqrt h}
\end{equation}
and thus, at the limit
\begin{equation} \langle\varphi,\varphi\rangle_{\mathcal{E}}\leq\|\delta(\varphi)\|_{\mathcal{E}}\,
\frac{\|\varphi\|_{\mathcal{E}}}{\sqrt h}.
\end{equation}

We have proved that $h$ is also a lower bound of $\sigma(\Delta_1)\backslash\{0\}.$
If we take $h=\inf\sigma(\Delta_0),$ it proves that
$$\inf(\sigma(\Delta_1)\backslash\{0\})\geq\inf\sigma(\Delta_0)>0.$$
Using that $l^2(\mathcal{V})=\overline{\Im\delta}$ one can show in the same way
$$\inf(\sigma(\Delta_1)\backslash\{0\})\leq\inf\sigma(\Delta_0).$$
\end{proof}
 
\begin{Corol}\label{vp}
Let $G=(\mathcal{V},\mathcal{E})$ be a connected, locally finite $\chi-$complete 
graph. Suppose that $0\notin\sigma(\Delta_0),$ then if  
$0\in\sigma(\Delta_1),$ then it is an isolated
value  and, as a consequence,  it is an eigenvalue (maybe with infinite 
multiplicity).
\end{Corol}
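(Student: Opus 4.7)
The plan is a short deduction from the Proposition that immediately precedes the corollary. By hypothesis $\Delta_0$ is invertible and the preceding Proposition gives
\[
\inf\bigl(\sigma(\Delta_1)\setminus\{0\}\bigr)=\inf\sigma(\Delta_0)=:h>0.
\]
Thus $\sigma(\Delta_1)\setminus\{0\}\subset [h,+\infty)$ with $h>0$. If $0\in\sigma(\Delta_1)$, then the open interval $(-h/2,h/2)$ meets $\sigma(\Delta_1)$ only at $0$, which is precisely the statement that $0$ is an isolated point of $\sigma(\Delta_1)$.

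For the second assertion I would invoke the spectral theorem for the self-adjoint (by $\chi$-completeness and \cite{AT}) operator $\Delta_1$. Let $\mathrm{P}_{\Omega}$ denote the spectral projector of $\Delta_1$ associated to a borelian $\Omega\subset\R$. Since $\{0\}$ is an isolated point of $\sigma(\Delta_1)$, one can take $\Omega=\{0\}$ as an open-closed subset of the spectrum; the corresponding projector $\mathrm{P}_{\{0\}}$ satisfies $\mathrm{P}_{\{0\}}\neq 0$ (because $0\in\sigma(\Delta_1)$, so the measure of $\{0\}$ for the spectral resolution is non-trivial). Any non-zero $\varphi\in\Ran\mathrm{P}_{\{0\}}$ then satisfies $\Delta_1\varphi=0$, which exhibits $0$ as an eigenvalue. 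The range $\Ran\mathrm{P}_{\{0\}}$ can of course be infinite-dimensional, which accounts for the parenthetical remark on the multiplicity.

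The only place that requires care is the passage from \emph{isolated point of the spectrum} to \emph{eigenvalue}; this is the standard fact that for a self-adjoint operator, an isolated point of the spectrum is automatically in the point spectrum, obtained by applying the functional calculus to the indicator $\mathbf{1}_{\{0\}}$, which is continuous on $\sigma(\Delta_1)$ precisely because $\{0\}$ is isolated. I do not expect any genuine obstacle; the preceding proposition does all the real work, and this corollary is a clean abstract consequence.
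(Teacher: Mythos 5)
Your proposal is correct and follows the paper's route: the isolation of $0$ comes from the preceding proposition exactly as you argue, and the paper then simply cites Theorem VII.11 of Reed--Simon for the fact that an isolated spectral point of a self-adjoint operator is an eigenvalue, which is precisely the spectral-projector argument you spell out.
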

This is a direct application of the Theorem VII.11 of \cite{RS}.

On the other hand we have the following generalization of \cite{Ay}
\begin{Theo}Let $G=(\mathcal{V},\mathcal{E})$ be a connected, locally finite $\chi-$complete 
  weighted graph. We suppose that the weights $(m,c)$ satisfy the following properties :
  \begin{align*}
    (1)\; & \exists M>0,\,\forall x\in\Vc \; \sum_{y\sim x} c(x,y)\leq M m(x)\\
          (2)\; &\exists \gamma>0,\, \forall (x,y)\in\Ec, \; c(x,y)\geq \gamma
  \end{align*}
then $0\in\sigma(\Delta_0)$ or $0\in\sigma(\Delta_1)$.
\end{Theo}
\begin{rema}
The hypothesis (1) assures that the Laplacians are bounded and if $0\notin\sigma(\Delta_0)$ then the Cheeger constant is positive and the Cheeger inequality holds. With (1) and (2) the proof of lemma 3.5 of \cite{Ay} still true.
\end{rema}
\subsection{Examples}\label{exp}
The idea of this examples comes from \cite{ABT1} and \cite{ABT2}.
\begin{Exp}We recall that on the simple graph $(\Vc=\Z,\Ec=\{(x,y), |x-y|=1\})$
  the spectra are purely continuous :
  \[\sigma(\Delta_0)=\sigma(\Delta_1)=[0,4].\]
\end{Exp}
\begin{Exp}
Let us consider the graph $G_1$ with $\mathcal{V}_1 =\Z$, $\mathcal{E}_1=\{(x,y), |x-y|=1\},$
conductivity $c(j,j+1)=(j+1)^2+1$ and
voltage mass constant equal to 1.
\begin{Prop}The graph $G_1$ is $\chi-$ complete.
\end{Prop}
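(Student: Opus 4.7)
The plan is to build explicit cutoff functions adapted to the growth $c(j,j+1)=(j+1)^2+1\sim j^2$. At a vertex~$j$ the condition (3) of $\chi$-completeness involves sums of the form $c(j,j+1)\bigl(\chi_n(j+1)-\chi_n(j)\bigr)^2$, so I want the increments of $\chi_n$ to decay like $1/j$ on the tail, cancelling the quadratic weight. This suggests a logarithmic decay profile for $\chi_n$ outside a growing window.

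Concretely, I would take the exhausting sequence $B_n=\{j\in\Z : |j|\leq n\}$ and set
\[
S_n := \sum_{k=n+1}^{2n}\frac{1}{k},\qquad
\chi_n(j)=\begin{cases}
1,& |j|\leq n,\\
1-\dfrac{1}{S_n}\displaystyle\sum_{k=n+1}^{|j|}\frac{1}{k},& n<|j|\leq 2n,\\
0,& |j|>2n.
\end{cases}
\]
The normalisation by $S_n$ ensures that $\chi_n$ reaches $0$ at $|j|=2n$, so $\chi_n\in\mathcal{C}_0(\mathcal{V}_1)$ with values in $[0,1]$, and by construction $\chi_n\equiv 1$ on $B_n$. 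Conditions (1) and (2) are therefore immediate. The only global quantitative ingredient needed is the uniform lower bound $S_n\geq n\cdot\tfrac{1}{2n}=\tfrac12$, valid for every $n\geq 1$ (and in fact $S_n\to\log 2$).

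The verification of (3) reduces to a one-line computation. Away from the transition zone, $\di\chi_n$ vanishes, so I only need to estimate on edges $(j,j+1)$ with $n\leq j<2n$ (and symmetrically on the negative side). For such an edge,
\[
|\di\chi_n(j,j+1)|=\frac{1}{(j+1)\,S_n},
\qquad
c(j,j+1)\,\di\chi_n(j,j+1)^2
=\frac{(j+1)^2+1}{(j+1)^2\,S_n^{2}}\leq \frac{2}{S_n^{2}}\leq 8.
\]
Since $m\equiv 1$, every vertex has exactly two incident unoriented edges, and the sum $\sum_{e,\,e^\pm=x}$ counts each of them twice, I obtain
\[
\frac{1}{m(x)}\sum_{e,\,e^\pm=x}c(e)\,\di\chi_n(e)^{2}\leq 32
\]
uniformly in $n$ and in $x$. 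Taking $C=32$ this establishes (3), and hence $\chi$-completeness of $G_1$.

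There is no real obstacle here beyond guessing the right profile: once the logarithmic ansatz is written down, the quadratic growth of $c$ is exactly cancelled and the remaining estimate is elementary. Should one prefer a metric-theoretic argument, the same conclusion follows from the Lenz–Schmidt–Wojciechowski characterisation \cite{LSW21} by checking that $\rho(j,j+1):=1/\sqrt{2\,c(j,j+1)}$ defines an intrinsic path pseudo-metric whose balls are finite (since $\sum 1/\sqrt{c(j,j+1)}\sim\sum 1/j=\infty$), but the direct construction above is the cleanest route within the framework of the paper.
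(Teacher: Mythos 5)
Your proof is correct and follows essentially the same route as the paper: exhaust $\Z$ by the intervals $\{|j|\le n\}$ and construct explicit cutoff functions supported in the dyadic annulus $n<|j|\le 2n$ whose increments are small enough to cancel the quadratic growth of $c$, then verify condition (3) directly with a uniform constant. The only difference is cosmetic: the paper takes the piecewise-linear profile $\chi_n(x)=\bigl(\bigl(2-\tfrac{|x|}{n+1}\bigr)\vee 0\bigr)\wedge 1$, whose increments are bounded by $1/(n+1)$, and bounds $c$ by its supremum over the support (constant $40$), whereas you use a harmonic-sum profile with increments $1/((j+1)S_n)$ that cancels $c(j,j+1)\sim j^2$ edge by edge (constant $32$); both verifications are elementary and equally valid.
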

Indeed, let, for $n\in\N\backslash\{0\}$
\begin{equation*}
\chi_n(x)=\Big(2-\frac {|x| }{n+1}\vee 0\Big)\wedge 1
\end{equation*}
Then $\chi_n(x)=1$ on $[-(n+1),n+1]\cap\Z$, $\chi_n(x)=0$ outside of 
$[-2(n+1),2(n+1)]\cap\Z$.\\

 We remark that if $y\sim x$ then, by the triangle inequality, $||x|-|y||\leq 1$ so
\begin {equation}\label{eq:dchi}
  y\sim x\Rightarrow |\mathrm{ d }^{0}\chi_{n}(x,y)|=|\chi_{n}(x)-\chi_{n}(y)|\leq\frac{1}{n+1}.
\end {equation}
Thus, $\forall n\in \N\backslash\{0\},\, x \in \mathcal{V}_1$
\begin{equation*}
\frac{1}{m(x)}\sum_{e,e^\pm=x}c(e)\di\chi_n(e)^2\leq \sup_{|j|\leq (n+1)+2{n}}
\frac{2(|j|^2+1)}{({n+1})^2}\leq\frac{4(9n^2+1)}{n^2}\leq 40.
\end{equation*}
\begin{Prop}
$0\notin\sigma(\Delta_0).$
\end{Prop}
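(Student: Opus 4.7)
Since $G_1$ has just been shown to be $\chi$-complete, the operator $\Delta_0$ is essentially self-adjoint on $\mathcal{C}_0(\Vc_1)$. By the spectral theorem, it suffices to exhibit a constant $h>0$ such that
\[
\|\mathrm{d}f\|_{\Ec}^2 = \sum_{j\in\Z} \bigl((j+1)^2+1\bigr)\bigl(f(j+1)-f(j)\bigr)^2 \;\geq\; h\,\|f\|_{\Vc}^2
\]
for every $f\in\mathcal{C}_0(\Vc_1)$; this will yield $\inf\sigma(\Delta_0)\geq h>0$.

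The plan is to derive such an $h$ from the classical discrete Hardy inequality
\[
\sum_{m\geq 0}\Bigl(\sum_{n\geq m} a(n)\Bigr)^2 \leq 4\sum_{n\geq 0}(n+1)^2\,a(n)^2,
\]
valid for every compactly supported sequence $(a(n))_{n\geq 0}$ (which itself is just the dual of the boundedness on $\ell^2$, with norm $2$, of the discrete Ces\`aro averaging operator). Setting $g(k):=f(k+1)-f(k)$, the compact support of $f$ gives $f(j)=-\sum_{k\geq j}g(k)$ for $j\geq 0$ and $f(j)=\sum_{k\leq j-1}g(k)$ for $j\leq 0$. Applying Hardy on the right half-line together with the trivial estimate $(k+1)^2\leq (k+1)^2+1 = c(k,k+1)$ yields
\[
\sum_{j\geq 0} f(j)^2 \leq 4\sum_{k\geq 0}(k+1)^2 g(k)^2 \leq 4\|\mathrm{d}f\|_{\Ec}^2.
\]
For the left half-line, after the reflection $l=-k-1$ the conductivity becomes $c(-l-1,-l)=l^2+1$, and combining Hardy with the elementary $(l+1)^2\leq 2(l^2+1)$ produces $\sum_{j\leq 0} f(j)^2 \leq 8\|\mathrm{d}f\|_{\Ec}^2$. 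Adding the two bounds (at the harmless cost of counting $f(0)^2$ twice) gives $\|f\|_{\Vc}^2 \leq 12\|\mathrm{d}f\|_{\Ec}^2$, so one can take $h=1/12$.

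The main obstacle is purely the bookkeeping on the two half-lines and the two elementary weight comparisons; no deeper analysis is required. The crucial structural fact being exploited is that the conductivity $c(j,j+1)$ grows exactly at the critical quadratic rate, matching the weight $(n+1)^2$ on the right-hand side of the discrete Hardy inequality; this is what creates a positive spectral gap. A slower growth such as $c(j,j+1)\sim|j|^\alpha$ with $\alpha<2$ would violate the Muckenhoupt condition and one would expect $0\in\sigma(\Delta_0)$ instead.
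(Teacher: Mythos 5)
Your proof is correct, but it follows a genuinely different route from the paper. The paper's argument is a self-contained, whole-line computation: writing $1=(k+1)-k$, performing an Abel summation to get $\|f\|_{\Vc}^2=\sum_k k\bigl(|f(k-1)|^2-|f(k)|^2\bigr)$, factoring and applying Cauchy--Schwarz together with $(a+b)^2\leq 2a^2+2b^2$, which yields directly $\sum_k(k^2+1)|f(k-1)-f(k)|^2\geq \tfrac14\|f\|_{\Vc}^2$, i.e.\ the explicit gap $\inf\sigma(\Delta_0)\geq \tfrac14$. You instead split $\Z$ into two half-lines, represent $f$ by telescoping sums of $g=\mathrm{d}f$, and invoke the classical dual discrete Hardy inequality on each half-line together with the elementary weight comparisons $(k+1)^2\leq (k+1)^2+1$ and $(l+1)^2\leq 2(l^2+1)$; after recombining you get $\|f\|_{\Vc}^2\leq 12\,\|\mathrm{d}f\|_{\Ec}^2$ (in fact $8\|\mathrm{d}f\|_{\Ec}^2$ if you note that the two half-line estimates use disjoint sets of edges), hence $h\geq 1/12$. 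Both arguments then conclude identically: $\chi$-completeness gives essential self-adjointness, so the quadratic-form bound on $\mathcal{C}_0(\Vc)$ controls the bottom of the spectrum. What your route buys is modularity and a structural explanation \tir the conductance $c(j,j+1)=(j+1)^2+1$ grows at exactly the critical quadratic rate matching the Hardy weight, which is why a gap appears; what the paper's route buys is a sharper constant ($1/4$ versus your $1/12$) and complete self-containedness, since its Abel-summation-plus-Cauchy--Schwarz computation is in effect an in-line proof of the very Hardy-type inequality you cite as a black box. Your closing heuristic about sub-quadratic growth $\alpha<2$ is a reasonable aside but is not needed for, and not part of, the claim.
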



Let $f\in\mathcal{C}_0(\mathcal{V})$, we have
\begin{align*}
\|f\|_{\mathcal{V}}^2=&\sum_{k\in \mathbb{Z}}|f(k)|^2\\
=&\sum_{k\in \mathbb{Z}}(k+1-k)|f(k)|^2\\
=&\sum_{k\in \mathbb{Z}}(k+1)|f(k)|^2-\sum_{k\in \mathbb{Z}}k|f(k)|^2\\
=&\sum_{k\in \mathbb{Z}}k(|f(k-1)|^2-|f(k)|^2)\\
=&\sum_{k\in \mathbb{Z}}k(|f(k-1)|-|f(k)|)(|f(k-1)|+|f(k)|).
\end{align*}
From Cauchy-Schwarz inequality, we obtain
\begin{align*}
\|f\|_{\mathcal{V}}^2\leq &\Big(\sum_{k\in \mathbb{Z}}k^2|f(k-1)-f(k)|^2\Big)^\frac{1}{2}\Big(\sum_{k\in \mathbb{Z}}(|f(k-1)|+|f(k)|)^2\Big)^\frac{1}{2}\\
\leq &\Big(\sum_{k\in \mathbb{Z}}k^2|f(k-1)-f(k)|^2\Big)^\frac{1}{2}\Big(4\sum_{k\in \mathbb{Z}}|f(k)|^2\Big)^\frac{1}{2}
\end{align*}
Hence
$$\dfrac{1}{4}\|f\|_{\mathcal{V}}^2\leq \sum_{k\in \mathbb{Z}}(k^2+1)|f(k-1)-f(k)|^2.$$
It follows that
$$\dfrac{1}{4}\leq\dfrac{\langle\Delta_0 f,f\rangle_{\mathcal{V}}}{\langle f,f\rangle_{\mathcal{V}}}=\dfrac{\displaystyle{\sum_{k\in \mathbb{Z}}(k^2+1)|f(k-1)-f(k)|^2}}{\langle f,f \rangle_{\mathcal{V}}}.$$
\begin{Prop}
$0\in\sigma(\Delta_1).$
\end{Prop}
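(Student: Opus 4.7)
The plan is to exhibit an explicit nonzero $\varphi\in l^{2}(\mathcal{E}_{1})$ satisfying $\delta\varphi=0$; this forces $\Delta_{1}\varphi=\mathrm{d}\delta\varphi=0$, so $0$ is an eigenvalue of $\Delta_{1}$ and in particular lies in $\sigma(\Delta_{1})$. Note that the cycle-based construction used in Proposition \ref{zero} cannot be invoked here, because $G_{1}$ is a tree; the mechanism producing $0\in\sigma(\Delta_{1})$ is instead analytic and relies on the fast growth of the conductances $c_{j}:=c(j,j+1)=(j+1)^{2}+1$.

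Write $\varphi_{j}:=\varphi(j,j+1)$. Using Lemma \ref{lem1} together with $m\equiv 1$ and skew-symmetry, the equation $\delta\varphi(x)=0$ unfolds to
\[
c_{x-1}\varphi_{x-1}-c_{x}\varphi_{x}=0,\qquad\forall x\in\Z,
\]
so the sequence $(c_{j}\varphi_{j})_{j\in\Z}$ must be constant. Taking this constant equal to $1$ gives the natural candidate $\varphi_{j}=1/c_{j}=1/((j+1)^{2}+1)$.

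The first verification is that $\varphi\in l^{2}(\mathcal{E}_{1})$:
\[
\|\varphi\|_{\mathcal{E}}^{2}=\sum_{j\in\Z}c_{j}\varphi_{j}^{2}=\sum_{j\in\Z}\frac{1}{(j+1)^{2}+1}<\infty,
\]
the essential feature being that the sum of resistances $\sum_{j}1/c_{j}$ converges. The second is that $\varphi$ lies in the domain of the self-adjoint extension of $\Delta_{1}$: since $G_{1}$ is $\chi$-complete by the proposition just above, the Gau\ss-Bonnet operator is essentially self-adjoint by \cite{AT}, so $\delta_{\min}=\delta_{\max}$, and any $\varphi\in l^{2}(\mathcal{E}_{1})$ whose pointwise divergence $\delta\varphi$ lies in $l^{2}(\mathcal{V}_{1})$ belongs to $\Dom(\delta)$; here $\delta\varphi=0$, hence $\varphi\in\Dom(\Delta_{1})$ with $\Delta_{1}\varphi=0$.

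I do not foresee a serious obstacle. The one point to keep in mind is that a naive truncation $\varphi_{n}$ of $\varphi$ to $|j|\leq n$ cannot serve as a Weyl sequence for $0$: the boundary contributions to $\|\Delta_{1}\varphi_{n}\|_{\mathcal{E}}^{2}$ involve $c_{\pm n}\sim n^{2}$ and therefore blow up, while $\|\varphi_{n}\|_{\mathcal{E}}^{2}$ remains bounded. So one really should argue directly that $\varphi$ itself is a genuine $l^{2}$ eigenfunction of $\Delta_{1}$, rather than going through Weyl's criterion.
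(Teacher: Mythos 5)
Your proposal is correct and follows essentially the same route as the paper: both construct the explicit kernel element $\varphi(e)=1/c(e)$ (so that $c(e)\varphi(e)$ is constant and $\delta\varphi=0$ pointwise) and verify $\|\varphi\|_{\mathcal{E}}^2=\sum_e 1/c(e)<\infty$, concluding that $0$ is an eigenvalue of $\Delta_1$. Your additional remarks on the domain of the self-adjoint extension (via $\chi$-completeness) and on why a truncated Weyl sequence would fail are sound but not needed beyond what the paper does.
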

We recall that $\ker(\Delta_1)=\ker(\delta)$ and we want to construct an eigenfunction $\varphi \neq 0$ such that $\Delta_1(\varphi)=0$, which gives $\delta(\varphi)=0$. We define
$$\varphi_{n+1}=\varphi(n,n+1),\; n\in \Z.$$ We have $$\delta(\varphi)(n)=((n+1)^2+1)\varphi_{n+1}-(n^2+1)\varphi_{n}=0 .$$
So, the 1-form $\varphi$ must satisfy 
\begin{multline*}
\forall n\in\N,\, ((n+1)^2+1)\varphi_{n+1}=(n^2+1)\varphi_{n}=\dots=\varphi_0\Rightarrow 
\varphi_{n+1}=\frac{1}{(n+1)^2+1}\varphi_0\\
\hbox{and in the same way}\quad \forall n\in\N,\,\varphi_{-n}=\frac{1}{n^2+1}\varphi_0\\
\Rightarrow\|\varphi\|_{\mathcal{E}}^2 = \sum_{n\in\Z}|\varphi_0|^2((n)^2+1)\frac{1}{((n)^2+1)^2} = |\varphi_0|^2\sum_{n\in\Z}\frac{1}{(n)^2+1} .
\end{multline*}
Then we obtain, $\varphi\in l^2(\mathcal{E})$.
\end{Exp}
Moreover Proposition 3.5 can be generalized as follows
\begin{Prop}\label{ex3.2}Let $G=\Z$ with the weight on vertices $m=1$ and the weight $c$ on edges. If there exists a function $A$ on $\mathcal E$ and a constant $a>0$ such that
  \[\forall n\in\Z,\; A(n,n+1)-A(n-1,n)\geq a \hbox{ and }A^2(n,n+1)=c(n,n+1)
  \]
  then \[\sigma(\Delta_0)\subset [\frac{a^2}{4},+\infty).\]
\end{Prop}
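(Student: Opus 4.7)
My plan is to mimic exactly the Cauchy--Schwarz / Abel summation argument used in Example 3.2, replacing the concrete sequence $k$ by the abstract function $A$. Since $\mathcal{C}_0(\Vc)$ is a form core for $\Delta_0$, it is enough to prove the quadratic-form inequality $\langle\Delta_0 f,f\rangle_{\Vc}\geq\tfrac{a^2}{4}\|f\|_{\Vc}^2$ for every $f\in\mathcal{C}_0(\Vc)$; the spectral conclusion then follows from the spectral theorem.

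Fix such an $f$ and set $A_k:=A(k-1,k)$, so by hypothesis $A_{k+1}-A_k\geq a$ and $A_k^2=c(k-1,k)$. First, I would use the lower bound on $A_{k+1}-A_k$ to write
\[
a\,\|f\|_{\Vc}^2\;\leq\;\sum_{k\in\Z}(A_{k+1}-A_k)\,|f(k)|^2.
\]
Since $f$ has finite support, a shift of index (summation by parts) gives
\[
\sum_{k\in\Z}(A_{k+1}-A_k)\,|f(k)|^2=\sum_{k\in\Z}A_k\bigl(|f(k-1)|^2-|f(k)|^2\bigr)=\sum_{k\in\Z}A_k\bigl(|f(k-1)|-|f(k)|\bigr)\bigl(|f(k-1)|+|f(k)|\bigr).
\]

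Next I would apply Cauchy--Schwarz to the last sum, then use the reverse triangle inequality $\bigl||f(k-1)|-|f(k)|\bigr|\leq|f(k-1)-f(k)|=|\mathrm d f(k-1,k)|$, together with $A_k^2=c(k-1,k)$, to recognise
\[
\sum_{k\in\Z}A_k^2\bigl(|f(k-1)|-|f(k)|\bigr)^2\;\leq\;\sum_{k\in\Z}c(k-1,k)\,|\mathrm d f(k-1,k)|^2\;=\;\|\mathrm d f\|_{\Ec}^2\;=\;\langle\Delta_0 f,f\rangle_{\Vc}.
\]
The remaining factor is controlled by $(|f(k-1)|+|f(k)|)^2\leq 2(|f(k-1)|^2+|f(k)|^2)$, so $\sum_k(|f(k-1)|+|f(k)|)^2\leq 4\|f\|_{\Vc}^2$. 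Combining these bounds yields
\[
a\,\|f\|_{\Vc}^2\;\leq\;\sqrt{\langle\Delta_0 f,f\rangle_{\Vc}}\cdot 2\|f\|_{\Vc},
\]
hence $\langle\Delta_0 f,f\rangle_{\Vc}\geq\tfrac{a^2}{4}\|f\|_{\Vc}^2$ for all $f\in\mathcal{C}_0(\Vc)$, which by density and the min-max characterisation of the bottom of the spectrum gives $\sigma(\Delta_0)\subset[a^2/4,+\infty)$.

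There is no serious obstacle: the only delicate point is the summation by parts, which is legitimate exactly because we restricted to finitely supported $f$, and the reverse triangle inequality, which is what lets us drop the absolute values and reconstruct $|\mathrm d f|$. Specialising to $A(n,n+1)=n+1$ (where $a=1$) recovers the bound $\inf\sigma(\Delta_0)\geq 1/4$ obtained in Example 3.2, which confirms that the argument is just the abstract version of the computation already done there.
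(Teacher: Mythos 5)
Your argument is correct and is essentially the paper's own proof: the same Abel summation against the increments of $A$, the same Cauchy--Schwarz step using $A^2=c$ to reconstruct $\|\mathrm d f\|_{\mathcal{E}}$, the same bound $\sum_k\bigl(|f(k-1)|+|f(k)|\bigr)^2\leq 4\|f\|_{\mathcal{V}}^2$, and the same conclusion via the min-max principle (your insertion of absolute values and the reverse triangle inequality is only a cosmetic variant). The one point you assert without justification --- that $\mathcal{C}_0(\mathcal{V})$ is a form core for the self-adjoint operator $\Delta_0$ --- is exactly what the paper secures by citing the essential self-adjointness result of \cite{nth}, so you should invoke that (or an equivalent completeness argument) to make the spectral conclusion unambiguous.
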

\begin{proof}Let $f$ be any function defined on $\Z$ with finite support
  \begin{align*}
    a\|f\|^2=a\sum_{n\in\Z}f(n)^2&\leq\sum_{n\in\Z}(A(n,n+1)-A(n-1,n))f(n)^2\\
    &\leq \sum_{n\in\Z}A(n,n+1)(f(n)^2-f(n+1)^2)\\
    &\leq \sum_{n\in\Z}A(n,n+1)(f(n)-f(n+1))(f(n)+f(n+1))\\
    &\leq \sqrt{\sum_{n\in\Z}c(n,n+1)(f(n)-f(n+1))^2}\sqrt{\sum_{n\in\Z}(f(n)+f(n+1))^2}\\
    &\leq2 \|\mathrm{ d }f\|\|f\|
  \end{align*}
  But $\Delta_0$ is essentially self-adjoint in this case by \cite{nth}, the conclusion follows then by
  the minimax formula.
\end{proof}
In addition, Theorem 14.29 of \cite{JP} tells that on the graph $\Z$ with constant weight
on vertices, and weight $c$ on edges the existance of an harmonic function with finite energy is equivalent to the condition $\displaystyle \sum_{e} \frac{1}{c(e)}$ is finite.
We list in the following few results concerning this specific situation.

\begin{Prop}\label{14.29}Let $G=\Z$ with the weight on vertices $m=1$ and the weight $c$ on edges. Then we have 
  $$ \sum_{e} \frac{1}{c(e)} < \infty                   \Longleftrightarrow  \ker(\delta_1)\neq\{0\} \hbox{ on }
  l^2(\mathcal E).$$
\end{Prop}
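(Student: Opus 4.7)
The plan is to reduce everything to a two–term recurrence on $\Z$ and then compute the $l^2$ norm of the unique (up to scalar) solution.

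First, I parametrize any skew-symmetric 1-form $\varphi$ on $\mathcal{E}$ by the sequence $\varphi_n:=\varphi(n-1,n)$, $n\in\Z$, noting $\varphi(n,n-1)=-\varphi_n$. Using the formula for $\delta$ given in Lemma \ref{lem1} with $m\equiv 1$, the edges with $e^+=n$ are $(n-1,n)$ and $(n+1,n)$, so
\[
\delta\varphi(n)=c(n-1,n)\varphi_n-c(n,n+1)\varphi_{n+1}.
\]
Therefore $\delta\varphi\equiv 0$ is equivalent to the conservation law
\[
c(n,n+1)\varphi_{n+1}=c(n-1,n)\varphi_n\quad\text{for all }n\in\Z,
\]
which, by induction in both directions from $n=0$, forces $c(n,n+1)\varphi(n,n+1)$ to be a constant $C\in\R$. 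Hence every element of $\ker\delta$ (in the space of skew-symmetric 1-forms) has the explicit form $\varphi(n,n+1)=C/c(n,n+1)$, and conversely any such $\varphi$ lies in $\ker\delta$.

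Next I compute the norm, grouping the two orientations of each edge:
\[
\|\varphi\|_{\mathcal{E}}^2=\frac{1}{2}\sum_{e\in\mathcal{E}}c(e)\varphi(e)^2=\sum_{n\in\Z}c(n,n+1)\,\frac{C^2}{c(n,n+1)^2}=C^2\sum_{n\in\Z}\frac{1}{c(n,n+1)}.
\]
Thus $\varphi\in l^2(\mathcal{E})$ with $\varphi\neq 0$ is possible if and only if $C\neq 0$ and $\sum_{n\in\Z}1/c(n,n+1)<\infty$, i.e.\ $\sum_{e}1/c(e)<\infty$ (up to the factor $2$ corresponding to counting each unoriented edge twice, which does not affect convergence).

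Combining the two directions finishes the proof: if $\sum_e 1/c(e)<\infty$, then $\varphi(n,n+1):=1/c(n,n+1)$ defines a nonzero $l^2$-element of $\ker\delta$; conversely, any nonzero $\varphi\in\ker\delta\cap l^2(\mathcal{E})$ has $C\neq 0$ and hence forces the same series to converge. No real obstacle is expected — the argument is essentially the same recurrence already used in Example \ref{ex3.2}, now carried out with a general conductance $c$ instead of $c(n,n+1)=(n+1)^2+1$; the only care needed is the book-keeping of orientation and the factor $\tfrac12$ in the definition of $\langle\cdot,\cdot\rangle_{\mathcal{E}}$.
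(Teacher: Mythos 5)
Your proof is correct and follows essentially the same route as the paper: the identity $\delta\varphi(n)=c(n-1,n)\varphi(n-1,n)-c(n,n+1)\varphi(n,n+1)$ forces $c(\cdot)\varphi(\cdot)$ to be constant on $\ker\delta$, and the norm computation $\|\varphi\|_{\mathcal{E}}^2=C^2\sum_e 1/c(e)$ then gives both implications at once. The only difference is presentational — you characterize the whole kernel first and treat the two directions together, and you track the orientation/$\tfrac12$ bookkeeping a bit more explicitly — but the substance matches the paper's argument.
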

\begin{proof}[Proof of the proposition]
 
  We start by the first implication, supposing that the $(\frac{1}{c(n,n+1)})_{n\in\Z}$ is sommable
  we have to find $\varphi \in l^2(\Ec)$, $\varphi \neq 0$ and $\delta_{1} \varphi=0$.\\

We set $\varphi (e)=\frac{1}{c(e)}$, then we obtain 
\begin{multline*}
\delta_{1} \varphi(n)= c(n-1,n) \varphi(n-1,n) -c(n,n+1) \varphi(n,n+1)\\= c(n-1,n)\frac{1}{c(n-1,n)} -c(n,n+1)\frac{1}{c(n,n+1)}= 1-1=0.
\end{multline*}
And we have 
$$\norm{\varphi}_{\mathcal{E}}^{2}=  \sum_{e} {c(e)} \varphi(e)^2 =  \sum_{e} {c(e)} \left( \frac{1}{c(e)}\right) ^2=\sum_{e} \frac{1}{c(e)} < \infty  $$
so the form $\varphi$ is in $l^2(\Ec)$.

Now, we suppose that there exists $\varphi \in l^2(\Ec)$, $\varphi \neq 0$ satisfying $\delta_{1} \varphi=0$.
We obtain 
 $$c(n-1,n) \varphi(n-1,n) -c(n,n+1) \varphi(n,n+1)= 0$$ witch gives
 
$$c(n-1,n) \varphi(n-1,n) =c(n,n+1) \varphi(n,n+1)= a \in \R$$

We know that $\norm{\varphi}_{\mathcal{E}}<\infty$, but
\begin{equation*}
 \norm{\varphi}_{\mathcal{E}}^2=\sum_{e}c(e)\left(a\frac{1}{c(e)}\right)^2=a^2 \sum_{e} \frac{1}{c(e)}  
\end{equation*}
So, we have $a\neq 0$ and $\sum_{e} \frac{1}{c(e)}<\infty$.
\end{proof}
\begin{rema}The harmonic function given in \cite{JP} is then the integral of our $\varphi$.
\end{rema}
\begin{Exp}
Thanks to \cite{JP}, we consider the graph $\Vc=\Z$ with weights $m(n)=1$ and $c(n,n+1)=\alpha^{|n|},\;\alpha > 1$. \\
\begin{itemize}
\item By Proposition \ref{ex3.2}, 0 is not an eigenvalue of $\Delta_{0}$.
 Indeed, we take
 \begin{equation*}
    A(n,n+1)=\begin{cases}\alpha^{n/2} \hbox{ if $n\geq 0$}\\
    -\alpha^{-n/2}\hbox{ if $n\leq -1$}
    \end{cases}
  \end{equation*}
  For $n\geq 1$ we have
  \[\alpha^{n/2}-\alpha^{(n-1)/2}=\alpha^{(n-1)/2}(\sqrt\alpha-1)\geq(\sqrt\alpha-1)>0.\]
  For $n=0$ we have
  \[1-(-\sqrt\alpha)=1+\sqrt\alpha\geq(\sqrt\alpha-1).\]
  For $n\leq -1$ we have
  \[-\alpha^{-n/2}+\alpha^{(-n+1)/2}=\alpha^{-n/2}(\alpha^{1/2}-1)\geq(\sqrt\alpha-1).\]

  So the proposition applies with $a=(\sqrt\alpha-1)$.\\
  \item By Proposition \ref{14.29} we have, 0 is an eigenvalue of $\Delta_{1}$  because the following is satisfied
$$ \sum_{e} \frac{1}{c(e)} < \infty$$   \end{itemize}

  \hfill$\diamondsuit$
  \\
\end{Exp}
\begin{Exp}\label{gg}
Let us consider the graph $G_{2}$ with $\mathcal{V}_2=\Z$, $\mathcal{E}_2=\{(x,y), |x-y|=1 \text{ or } x=-y\},$
conductivity $c(j,j+1)=(j+1)^2+1$, $c(j,-j)=1$ and the voltage mass $m$ equal to 1.
  Let us consider the following infinite weighted graph $G_{2}$, see Figure 2, with (almost) constant degree.
  We denote the origin by $x_0=0$ and  by $S_n$ the spheres for the combinatoric distance of the symmetric underlying graph:
\[d_{\rm comb}(x_0,x)=\inf\{k;\,\exists \gamma=(x_0,\dots,x_k) \hbox{ a chain such that }x_k=x\}\]
So $S_n=\{x\in V,~ d_{\rm comb}(x_0,x)=n\}=\{n,-n\}$.\\

 \begin{figure}[ht]
\includegraphics[height=4cm,width=10cm]{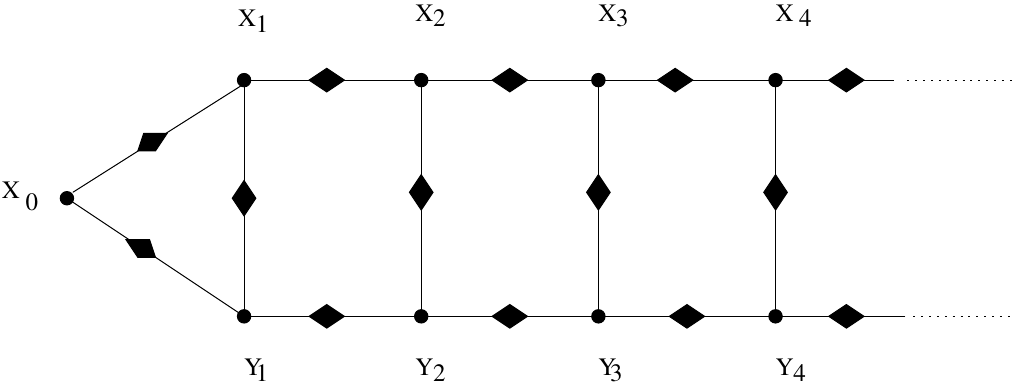}
\caption{A graph with almost constant degree}
 \end{figure}

\begin{Prop}
The graph $G_2$ is $\chi$-complete. And we have $$0\notin \sigma(\Delta_0) \hbox{ and } 0\in\sigma(\Delta_1).$$
\end{Prop}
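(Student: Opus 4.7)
The plan is to reduce each of the three assertions for $G_2$ to the analysis already carried out for $G_1$, exploiting that $G_2$ is obtained from $G_1$ by adjoining the transverse edges $(n,-n)$, $n\neq 0$, each of conductivity $1$, without altering the vertex weights.

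For $\chi$-completeness, I would reuse the cutoff sequence $\chi_n(x)=\bigl((2-\tfrac{|x|}{n+1})\vee 0\bigr)\wedge 1$ already introduced for $G_1$. Since $\chi_n$ depends only on $|x|$, one has $\chi_n(-x)=\chi_n(x)$, hence $\di\chi_n$ vanishes identically on every transverse edge $(x,-x)$. Consequently the sum
\[\frac{1}{m(x)}\sum_{e,\,e^\pm=x}c(e)\,\di\chi_n(e)^2\]
receives no contribution from the new edges, and the uniform bound from the $G_1$ computation continues to hold. For $0\notin\sigma(\Delta_0)$, since $\Ec_1\subset\Ec_2$ and the transverse conductivities are positive, the Dirichlet form on $G_2$ dominates that on $G_1$: for every $f\in\Cc_0(\Vc)$,
\[\langle\Delta_0 f,f\rangle_\Vc=\|\di f\|_{\Ec_2}^2\geq\|\di f\|_{\Ec_1}^2\geq\tfrac{1}{4}\|f\|_\Vc^2,\]
by the Hardy-type estimate established for $G_1$. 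Hence $\inf\sigma(\Delta_0)\geq 1/4>0$.

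Finally, $G_2$ contains the cycle $0\to 1\to -1\to 0$ given by the edges $(0,1)$, $(1,-1)$, $(-1,0)$, all of which belong to $\Ec_2$. Applying the earlier proposition that any weighted, connected, locally finite graph containing a cycle has $0\in\sigma(\Delta_1)$, via the compactly supported $1$-form $\varphi(e)=1/c(e)$ on the cycle extended by $0$, we obtain $0\in\sigma(\Delta_1)$. The only point requiring verification is that the transverse edges do not spoil any of the inherited estimates, which is immediate here: being connected to pairs symmetric about the origin they are invisible to the radial cutoffs, and they only augment the Dirichlet form by nonnegative terms.
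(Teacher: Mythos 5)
Your proposal is correct, and for two of the three claims it takes a route different from the paper's. For $0\notin\sigma(\Delta_0)$ you argue exactly as the paper does: the Dirichlet form of $G_2$ dominates that of $G_1$ because the transverse edges only add nonnegative terms, so the bound $\langle\Delta_0 f,f\rangle_{\mathcal{V}}\geq\frac14\|f\|_{\mathcal{V}}^2$ is inherited. For $\chi$-completeness, however, the paper does not reuse the cutoffs of $G_1$; it invokes the criterion of Theorem 5.11 of \cite{BGJ}, computing the sphere quantities $a_n^+=(n+1)^2+1$, $a_n^-=n^2+1$ and checking the divergence of $\sum_n (a_n^++a_{n+1}^-)^{-1/2}$. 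Your argument --- that $\chi_n$ depends only on $|x|$, so $\di\chi_n$ vanishes on every edge $(x,-x)$ and the $G_1$ bound carries over verbatim --- is a valid direct verification of the definition, more elementary and self-contained, though it exploits the special $x\mapsto -x$ symmetry; the paper's criterion is more robust when no such symmetry is available. For the last claim, you use a single cycle $C_1=((0,1),(1,-1),(-1,0))$ and the earlier proposition on graphs containing a cycle, which indeed yields $0\in\sigma_p(\Delta_1)\subset\sigma(\Delta_1)$ and so suffices for the statement as written. The paper instead constructs the whole family of $1$-forms $\varphi_k(e)=1/c(e)$ supported on the infinitely many independent cycles $C_k$, obtaining that $0$ is an eigenvalue of infinite multiplicity and hence lies in the \emph{essential} spectrum of $\Delta_1$ --- a strictly stronger conclusion (relevant, e.g., in view of Corollary \ref{vp}) that your single-cycle argument does not give. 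In short: your proof establishes the proposition, but loses the infinite-multiplicity/essential-spectrum information and trades the paper's general $\chi$-completeness criterion for a symmetry-based shortcut.
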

\begin{Demo}
\begin{enumerate}
\item First, we show  that $G_2$ is $\chi$-complete from the criterion given
in Theorem 5.11 of \cite{BGJ}.
We remark that the sets $S_n^+$ and
$S_n^-$ introduced in \cite{BGJ} coincide with $S_n$ and for $x\in
S_n$ the weighted degree is constant and for $n\geq 2$:
\begin{gather*}
  a_n^+=\sup_{x\in S_n }\frac{1}{m(x)}\left(\sum_{y\in S_{n+1}}c(x,y)\right)=(n+1)^2+1\hbox{ , }
  a_n^-=\sup_{x\in S_n }\frac{1}{m(x)}\left(\sum_{y\in S_{n-1}}c(x,y)\right)=n^{2}+1\\
\Rightarrow
  \sum_{n=0}^\infty\dfrac{1}{\sqrt{a_n^++a_{n+1}^-}}\geq\sum_{n=2}^\infty\dfrac{1}{\sqrt{2}(n+2)}=\infty.
\end{gather*}
Thus, this graph satisfies the hypothesis of \cite{BGJ}.\\
\item Next, we show that $0\notin \sigma(\Delta_0)$. Indeed, we see that the graph $G_2$ is obtained by adding the edges to the graph $G_1$ considered in the previous example, then
\begin{align*}
\langle\Delta_0 f,f\rangle_{\mathcal{V}}= &\Big(\sum_{(x,y)\in\mathcal{E}_2}c(x,y)|f(x)-f(y)|^2\Big)\\
=& \sum_{(x,y)\in\mathcal{E}_1}c(x,y)|f(x)-f(y)|^2+\sum_{(k,-k)\atop k\in \mathbb{Z}}|f(k)-f(-k)|^2\\
=&\sum_{k\in\mathbb{Z}}(k^2+1)|f(k-1)-f(k)|^2+\sum_{(k,-k)\atop k\in \mathbb{Z}}|f(k)-f(-k)|^2
\end{align*}
this implies that
$$\dfrac{1}{4}\leq\dfrac{\displaystyle{\sum_{k\in \mathbb{Z}}(k^2+1)|f(k-1)-f(k)|^2}}{\langle f,f \rangle_{\mathcal{V}}}\leq \dfrac{\langle \Delta_0 f,f \rangle_{\mathcal{V}}}{\langle f,f \rangle_{\mathcal{V}}}.$$
\item Finally, We remark that this graph contains an infinity number of cycles, then we can construct a sequence of independent eigenfunctions $f_k$ corresponding to 0 in the spectrum of $\Delta_1$, acting on the independent cycles $\left\lbrace C_{2k+1}\right\rbrace _{k \geq 0}$ where $$C_1=\left((0,1), (1,-1), (-1,0) \right) \text{ and } C_k=\left( ( k-1,k), ( k,-k),(-k,-k+1),( -k+1,k-1)  \right).$$ 
We define $\varphi_k$ by 
$$\varphi_k(e)=\left\lbrace \begin{array}{cc}
\dfrac{1}{c(e)}&\text{ if } e\in C_k \\
0 &\text{ otherwise }
\end{array}\right. $$
It follows that $0$ is an eigenvalue  of $\Delta_1$ with infinite multiplicity. Hence $0$ is in the essential spectrum.\\
\end{enumerate}
\end{Demo}
\end{Exp}

\textbf{\textit{\emph{Acknowledgments}}:}
All the authors would like to thank the anonymous referee for careful reading, numerous remarks, useful suggestions and valuable reference.

\end{document}